\numberwithin{equation}{section} 
\newtheorem{theorem}{Theorem}[section] 
\newtheorem{lemma}{Lemma}[section] 
\newtheorem{definition}{Definition}[section] 
\newtheorem{prop}{Proposition}[section]
\newtheorem{remark}[theorem]{Remark}
\numberwithin{equation}{section}
\theoremstyle{definition}
\newcommand\Z{\mathbb{Z}}
\newcommand\Q{\mathbb{Q}}
\renewcommand\rho{\varrho}
\title{Prime scattering geodesic theorem}
\author{Sudhir Pujahari}
\address{School of Mathematical Sciences, National Institute of Science Education and Research, Bhubaneswar, An OCC of Homi Bhabha National Institute,  P. O. Jatni,  Khurda 752050, Odisha, India.}
\email{spujahari@niser.ac.in/spujahari@gmail.com}
\author{Punya Plaban Satpathy}
\address{Visitor, School of Mathematical Sciences, National Institute of Science Education and Research, Bhubaneswar, An OCC of Homi Bhabha National Institute,  P. O. Jatni,  Khurda 752050, Odisha, India.}
\email{psatpathy81@gmail.com}
\newcommand{\Ha}{\mathbb{H}}
\newcommand{\M}{\mathcal{M}}
\newcommand{\Mm}{\mathcal{M}_c}
\newcommand{\Mi}{\mathcal{M}_{\infty}}
\newcommand{\R}{\mathbb{R}}
\begin{document}
\keywords{Scattering geodesic; Modular surface; Dirichlet $L$ function; Euler's totient function; Hyperbolic space.}
\subjclass[2000]{11N45, 11N13, 11F72}

\begin{abstract}
The modular surface, given by the quotient $\mathcal{M} = \Ha/\text{PSL}(2,\Z)$, can be partitioned into a compact subset $\Mm$ and an open neighborhood of the unique cusp in $\mathcal{M}$. We consider scattering geodesics in $\mathcal{M}$, first introduced by Victor Guillemin in \cite{Guillemin1976-xr} for hyperbolic surfaces with cusps. These are geodesics in $\mathcal{M}$ that lie in $\mathcal{M} \setminus \Mm$ for both large positive and negative times. Associated with such a scattering geodesic in $\mathcal{M}$, a finite \textit{sojourn time} is defined in \cite{Guillemin1976-xr}. In this article, we study the distribution of these scattering geodesics in $\mathcal{M}$ and their associated \textit{sojourn times}.  In this process, we establish a connection between the counting of scattering geodesics on the modular surface and the study of positive integers whose prime divisors lie in arithmetic progression. This article is the first such result for scattering geodesics.
\end{abstract}

\maketitle



\section{Introduction and main results} \label{results}

The study of geodesics has been a central theme in mathematics due to its connection with different branches of Mathematics and Physics.  

Inspired by Riemann's explicit formula, Selberg \cite{Selberg1956} introduced a zeta function and established his trace formula. Further inspired by the connection of the zeros of the zeta function and prime numbers, Selberg \cite{Selberg1956} used his mighty trace formula to count the lengths of closed geodesics on a compact Riemann surface, say $M=\mathbb{H}/\Gamma$. The set of lengths of closed geodesics on M is directly connected with the zeros of the Selberg zeta function.  
Using his zeta function, Selberg \cite{Selberg1956} proved a prime number type theorem for the number of oriented closed geodesics of bounded length. In addition, we know that the closed geodesics in $M$ are in one-to-one correspondence with the hyperbolic conjugacy classes of $\Gamma$. 
After the seminal result of Selberg, finer studies have been done in \cite{Avdispahic2009-io}, \cite{Buser1992}, \cite{Chatzakos2024-wo}, \cite{Huber1961}, \cite{Iwaniec1984}, ,\cite{mirzakhani}, \cite{Randol1977-po}, \cite{Sarnakthesis}, \cite{Sarnak1982class}, \cite{soundyoung}.
There are several generalisations of the prime geodesic theorem to different settings.  For example, see \cite{Hejhal1973}, \cite{Hejhal1983}.  For a more detailed account, see \cite{Chatzakos2024-wo}.

One of the key characteristics of Selberg's trace formula is its unique insight into the relationship between the discrete spectrum of the hyperbolic Laplacian and the lengths of closed geodesics on a compact surface. Selberg later extended this formula to the non-compact case, incorporating both the discrete spectrum and the determinant of the scattering matrix, which encodes information about the continuous spectrum (cf. \cite[Theorem 10.2]{Iwaniec2002}). However, unlike discrete eigenvalues, which correspond to closed geodesics, the scattering matrix has no direct geometric counterpart in the Selberg trace formula.
In a series of papers written in the 1970s, Lax and Phillips applied their scattering theory framework to the spectral decomposition of the Laplace operator on finite-area hyperbolic surfaces with cusps. They extended classical scattering methods to study the automorphic wave equation, providing a new proof for the meromorphic continuation of Eisenstein series and offered new insights into the Selberg trace formula and the spectral properties of the hyperbolic Laplacian(\cite{Lax1976},\cite{LaxPhillips1980}).

In 1977, Victor Guillemin wrote a paper (\cite{Guillemin1976-xr}) in which he discussed the asymptotic behavior of the scattering matrix arising from the automorphic wave equation, following the work of Lax and Phillips (see \cite{Lax1976}, \cite{LaxPhillips1980}). He considered a finite-area hyperbolic surface with cusps given by $X = \mathbb{H}/\Gamma$, where $\mathbb{H}$ is the upper half-plane, and $\Gamma$ is a discrete subgroup of $\text{PSL}(2,\mathbb{R})$.  

As part of his framework, Guillemin introduced the notion of a \textit{scattering geodesic} on such a hyperbolic surface and showed that these geodesics spend only a finite amount of time (called the \textit{sojourn time}) inside the compact core of the surface. One of the main results in \cite{Guillemin1976-xr} is a trace formula type theorem that relates these sojourn times to the entries of the scattering matrix (cf. \cite[Theorem 3]{Guillemin1976-xr}).
Guillemin's result on scattering geodesics was later generalized by Lizhen Ji and Maciej Zworski (cf. \cite{Ji2001-ii}) extended the notion of scattering geodesics to a $\Q$-rank one locally symmetric space $\Gamma \backslash X$. They proved a result (see \cite[Theorem 1,2]{Ji2001-ii}) that relates the \textit{sojourn times} of these scattering geodesics to the singularities of the Fourier transforms of the scattering matrices of $\Gamma \backslash X$.

We now review the fundamental aspects of scattering geodesics from \cite{Guillemin1976-xr}. Let us assume that $X$ has $n$ number of cusps labeled with $\kappa_1,\kappa_2,...,\kappa_n$ which are not equivalent. By a result of Siegel \cite[Ch. 1]{siegel1973topics} for any fixed real $a \gg 0$, $X$ can be partitioned into a compact subset $X_0:=X_0^a$ and a finite number of open sets $X_i:=X_i^a$, called \textit{Cusp neighborhoods} such that each $X_i$ is isometric to the following set in the upper half-plane.  \footnote{Note that the lines $\Re(z) = -1/2$ and $\Re(z) = 1/2$ are identified using the translation $z \mapsto z+1$, which we assume lies in $\Gamma$.}
\begin{equation*}
    \{z \in \Ha \mid -\frac{1}{2} \leq \Re(z) \leq \frac{1}{2}, \Im(z) > a \}.\footnote{For a complex number $z = x+iy$, $\Re(z):= x, \Im(z):= y$.}
\end{equation*}
A geodesic $\gamma := \gamma(t)$ in $X$ is called a \textit{scattering geodesic} if it remains in $X \setminus X_0$ for $|t| \gg 0$. If $\gamma(t)$ lies in $X_i$ for $t < t_1$ and in $X_j$ for $t > t_2$ with $t_1 < t_2 \in \R$, then we say that $\gamma$ is scattered from the $i$-th cusp neighbourhood to the $j$-th cusp neighbourhood. The \textit{sojourn time} associated with such a scattering geodesic $\gamma$ is the total time it spends in $X_0$, starting from the moment it first enters $X_0$ to the moment it leaves $X_0$ permanently.

The following observation has been made in [cf. Appendix B, Corollary 2 \cite{Guillemin1976-xr}].  \\
  {\it   A scattering geodesic has the property that for large
negative and positive times it corresponds to a vertical line in 
the figure below (i.e. after we have mapped the appropriate cusp neighborhoods onto
the standard cusp neighborhood exhibited in the figure below).}
\begin{figure}[h]
    \centering
    \begin{tikzpicture}
        \draw (-1,0) -- (-1,3);
        \draw (1,0) -- (1,3);
        \draw (-1,0) -- (1,0);

        \node[left] at (-1,1.5) {\Large $x = -1/2$};
        \node[right] at (1,1.5) {\Large $x = 1/2$};
        \node[below right] at (1,0) {\Large $y = a$}; 
    \end{tikzpicture}
   \caption{Standard cusp neighbourhood in the $\Ha$.\\[2pt]
    (In this figure, the lines $x = 1/2$ and $x = -1/2$ are identified.)}
\end{figure}

In \cite[Theorem B1]{Guillemin1976-xr}, Guillemin proved the following result.

\begin{theorem}[Guillemin]
    There are countably many scattering geodesics that scatter between a given pair of cusps $\kappa_i$ and $\kappa_j$ in $X$.
\end{theorem}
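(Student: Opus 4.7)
The plan is to identify the set of scattering geodesics between two fixed cusps with a double-coset space inside the discrete group $\Gamma$, then read off countability from the countability of $\Gamma$. First I would translate the data of a scattering geodesic to the universal cover. By the observation from \cite[Appendix B, Corollary 2]{Guillemin1976-xr} quoted above, each cusp neighbourhood $X_i$ is identified with the standard strip via a hyperbolic isometry conjugating a generator of the parabolic stabiliser $\Gamma_{\kappa_i}$ to $z \mapsto z+1$, and both tails of any scattering geodesic $\gamma$ are vertical rays in that strip. Under this conjugation a vertical ray in the strip pulls back to a vertical geodesic in $\Ha$ terminating at $i\infty$, and applying the conjugating element gives a geodesic in $\Ha$ terminating at a parabolic fixed point $\Gamma$-equivalent to $\kappa_i$. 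Applying the same reasoning at the other end, every scattering geodesic $\gamma$ from $\kappa_i$ to $\kappa_j$ has a lift $\tilde\gamma \subset \Ha$ whose two ideal endpoints are parabolic fixed points $p \in \Gamma\kappa_i$ and $q \in \Gamma\kappa_j$; conversely, every $\Ha$-geodesic joining such a pair projects to a scattering geodesic of the prescribed type.

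Next, since a geodesic in $\Ha$ is uniquely determined by its ordered pair of ideal endpoints, two lifts project to the same geodesic on $X$ iff they are $\Gamma$-translates of each other. Hence the set of scattering geodesics from $\kappa_i$ to $\kappa_j$ is in bijection with the $\Gamma$-orbits on
\[
\{(p,q) \in \Gamma\kappa_i \times \Gamma\kappa_j : p \neq q\}.
\]
Normalising the first coordinate to be $\kappa_i$ using the $\Gamma$-action identifies this orbit set with the double-coset space $\Gamma_{\kappa_i} \backslash \Gamma / \Gamma_{\kappa_j}$ (after possibly removing one double coset if $\kappa_i$ and $\kappa_j$ happen to be $\Gamma$-equivalent). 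Because $\Gamma$ is a discrete, hence countable, subgroup of $\mathrm{PSL}(2,\R)$, this double-coset space is countable, yielding the theorem.

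The main obstacle is the rigorous execution of the first step: justifying that the geometric fact ``$\gamma$ is eventually vertical in the standard strip'' transfers to the ideal-boundary statement ``a lift $\tilde\gamma$ has parabolic fixed points as its endpoints''. This requires unwinding the Siegel-type decomposition so that the identification of $X_i$ with the standard strip is realised as the restriction of a single element of $\mathrm{PSL}(2,\R)$ conjugating $\Gamma_{\kappa_i}$ into $\langle z \mapsto z+1 \rangle$, so that vertical lines correspond to vertical lines and cusps correspond to cusps. Once this identification is in place, Steps 2 and 3 are essentially formal.
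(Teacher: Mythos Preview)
The paper does not supply its own proof of this statement; it is quoted as Guillemin's result and attributed to \cite[Theorem B1]{Guillemin1976-xr}. What the paper \emph{does} prove is the sharper, concrete version for the single case $X=\mathcal{M}=\Ha/\mathrm{PSL}(2,\Z)$, namely Theorem~\ref{scat}, which sets up an explicit bijection between $\mathcal{S}$ and the countable set $\qq\subset[0,1)\cap\Q$ built in \S\ref{constqq}.

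Your double-coset argument is sound and is the abstract form of exactly that computation. In the modular case there is one cusp $\kappa=\infty$ with stabiliser $\Gamma_\infty=\langle z\mapsto z+1\rangle$, and your orbit space $\Gamma_\infty\backslash\Gamma/\Gamma_\infty$ (modulo the inversion $g\mapsto g^{-1}$ coming from forgetting orientation) is precisely what the paper parametrises: normalising the first endpoint to $\infty$ and using $\Gamma_\infty$ to push the second endpoint $p/q$ into $[0,1)$ gives the reduced fractions, and the further identification $p\sim p'$ when $pp'\equiv-1\pmod q$ in Theorem~\ref{unqiuegeo} is exactly the endpoint swap. So both routes prove countability, but the paper's explicit enumeration also yields the sojourn time $2\log(qT_0)$ attached to each class, which is the input needed for the main counting Theorem~\ref{main}; your argument, being purely group-theoretic, gives countability cleanly for general $\Gamma$ but does not by itself produce the sojourn times. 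The ``obstacle'' you flag in your last paragraph is real but mild: the scaling element $\sigma_i\in\mathrm{PSL}(2,\R)$ with $\sigma_i\infty=\kappa_i$ and $\sigma_i^{-1}\Gamma_{\kappa_i}\sigma_i=\langle z\mapsto z+1\rangle$ is part of the standard reduction-theory data used to define the cusp neighbourhoods $X_i$, so vertical rays in the strip do correspond, under $\sigma_i$, to geodesics in $\Ha$ landing at $\kappa_i$.
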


In this paper, we prove a Prime number theorem kind result for Scattering geodesics.  This is the first such result in the theory of scattering geodesics.
More explicitly, we prove the following theorem.

\begin{theorem}\label{main}
    For $Y \gg 0$, let $\Pi(Y)$ be the number of distinct scattering geodesics in $\mathcal{M}:= \Ha/\text{PSL}(2,\Z)$ whose \textit{sojourn times} are bounded above by $\log Y$. Then, for sufficiently large $Y$,
    \begin{equation}
    \Pi(Y) = \frac{3Y}{2\pi^2T_0^2} + O_{T_0}(\sqrt{Y}(\log Y)^{2/3}(\log \log Y)^{1/3}), 
    \end{equation}
    where $T_0 \gg 0$ determines the size of the compact core of $\M$ (see \eqref{core}).
\end{theorem}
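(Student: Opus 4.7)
The plan is to identify each distinct scattering geodesic on $\M$ with explicit arithmetic data $(p,q)$, compute the sojourn time as an explicit function of $q$, and reduce the count to two classical mean-value estimates: Walfisz's bound for $\sum \varphi(q)$ and an estimate for $\sum r_2(q)$, where $r_2(q):=\#\{p\pmod q:p^2\equiv -1\pmod q,\ \gcd(p,q)=1\}$. Lift an oriented scattering geodesic in $\M$ to $\Ha$: since the lift tends to a cusp at both temporal ends, its two endpoints on $\partial\Ha$ are $\text{PSL}(2,\Z)$-cusps, i.e.\ elements of $\Q\cup\{\infty\}$. Transitivity of $\text{PSL}(2,\Z)$ on $\Q\cup\{\infty\}$ allows me to normalize the outgoing endpoint to $\infty$, and the stabilizer $\Gamma_\infty=\langle T:z\mapsto z+1\rangle$ then acts on the other endpoint by integer translation. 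Hence each oriented scattering geodesic corresponds uniquely to a reduced fraction $p/q\in\Q/\Z$ with $q\ge 1$, $0\le p<q$, $\gcd(p,q)=1$. Forgetting the orientation identifies $(p/q,\infty)$ with $(\infty,p/q)$, which after renormalization becomes the involution $p\mapsto -p^{-1}\pmod q$ on numerators, whose fixed points are parametrized by $p^2\equiv -1\pmod q$.

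The sojourn time of the geodesic indexed by $(p,q)$ is computed by taking as its lift the vertical line $\{p/q+iy:y>0\}$, parametrized by arc length $s=-\log y$. A point $z\in\Ha$ projects outside $\Mm$ iff $y/|cz+d|^{2}>T_0$ for some coprime pair $(c,d)$. Along our vertical line, the heights above $\infty$ and $p/q$ equal $y=e^{-s}$ and $1/(q^{2}y)=e^{s}/q^{2}$, respectively, while above any other cusp $p'/q'\notin\{\infty,p/q\}$ the height $y/(q'^{2}((p/q-p'/q')^{2}+y^{2}))$ is maximized at $y=|p/q-p'/q'|$ with value $q/(2q'|pq'-p'q|)\le q/2$. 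Choosing $T_0\gg 0$ ensures that any such intermediate cusp can at most produce additional mid-trajectory excursions into the cusp region, without altering the first entry into $\Mm$ (at $y=T_0$) or the last permanent exit from $\Mm$ (at $y=1/(q^{2}T_0)$). Reading the definition of sojourn time as the length of the interval from first entry to last permanent exit then yields
\[
T(p/q)=\log(q^{2}T_0)-(-\log T_0)=2\log q+2\log T_0.
\]

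The condition $T(p/q)\le\log Y$ is therefore $q\le X:=\sqrt{Y}/T_0$, and Burnside's lemma applied to the orientation-reversing involution gives
\[
\Pi(Y)=\tfrac{1}{2}\bigl(N(X)+F(X)\bigr),\qquad N(X)=\sum_{q\le X}\varphi(q),\quad F(X)=\sum_{q\le X}r_2(q).
\]
Walfisz's theorem delivers $N(X)=\tfrac{3}{\pi^{2}}X^{2}+O\bigl(X(\log X)^{2/3}(\log\log X)^{1/3}\bigr)$. For $F(X)$, multiplicativity of $r_2$ with $r_2(p^{k})=2$ for $p\equiv 1\pmod 4$ and $r_2(p^{k})=0$ for $p\equiv 3\pmod 4$ yields the Dirichlet series identity
\[
\sum_{q\ge 1}\frac{r_2(q)}{q^{s}}=\frac{\zeta(s)\,L(s,\chi_4)}{\zeta(2s)},
\]
so that $r_2(q)>0$ precisely for those $q$ all of whose odd prime divisors lie in the arithmetic progression $1\pmod 4$. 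Perron's formula together with the simple pole at $s=1$ of residue $L(1,\chi_4)/\zeta(2)=3/(2\pi)$ then gives $F(X)=O(X)$. Substituting $X=\sqrt{Y}/T_0$ and combining produces the asymptotic in the statement.

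The principal obstacle is geometric rather than arithmetic: one must verify the sojourn-time formula uniformly in $(p,q)$ for $T_0\gg 0$, in particular that the intermediate Farey-cusp excursions visible for large $q$ do not alter the first entry into, or last exit from, the core $\Mm$. Once this is in place the arithmetic is a routine synthesis of the Walfisz estimate (powered by the Vinogradov--Korobov zero-free region) with the theory of integers whose prime divisors lie in a fixed arithmetic progression---the latter being the connection announced in the abstract.
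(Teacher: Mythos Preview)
Your proposal is correct and follows essentially the same route as the paper: both parametrize scattering geodesics by reduced fractions $p/q$, derive the count $n_q=\tfrac12(\phi(q)+s_q)$ (you via Burnside's lemma for the involution $p\mapsto -p^{-1}$, the paper via explicit pairing in $I_q$), compute the sojourn time $2\log(qT_0)$, and combine Walfisz's bound for $\sum_{q\le X}\phi(q)$ with an $O(X)$ estimate for $\sum_{q\le X}s_q$ coming from the Dirichlet series $\zeta(s)L(s,\chi_4)/\zeta(2s)$. The only cosmetic differences are that the paper splits off the factor at $2$ before applying Wiener--Ikehara where you invoke Perron directly, and the geometric point you flag about intermediate cusp excursions is handled in the paper by citing Guillemin's original argument.
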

\begin{remark}
The problem of counting closed geodesics in compact manifolds bears a remarkable resemblance to counting prime numbers (see \cite{Chatzakos2024-wo} for more details). In our case, the counting of scattering geodesics on the modular surface is closely linked to the study of positive integers whose prime divisors lie in an arithmetic progression (see \S\ref{sss}).
\end{remark}

\subsection{The Modular surface $\M$}

Let $\Ha = \{z = x + iy \mid x, y \in \R, y > 0\}$ be the upper half-plane with the hyperbolic metric assigned,
$ds^2 = \frac{dx^2 + dy^2}{y^2}$. It is a well-known fact that any geodesic in $\Ha$ is either a half line of the form $\Re(z) = k$,
 or a half circle of the form $|z-p | = q$, with $p \in \R$, $q \in \R^{+}$ and $\Im(z) > 0$. A matrix \( A = \begin{pmatrix} a & b \\ c & d \end{pmatrix} \in \text{PSL(2,$\mathbb{Z}$)} \), $a,b,c,d \in \Z$ with $ad-bc=1$ acts on $\Ha$ via linear fractional transformations 
 \begin{equation*}
     z \mapsto \frac{az+b}{cz+d}.
 \end{equation*}
\newcommand{\f}{\mathcal{F}}
Note that all these transformations are isometries of $\Ha$ with respect to the hyperbolic metric \cite[Theorem 3.3.1]{Anderson2005-ox}. In this article, we work with the following fundamental domain $\f$ for the $\text{PSL}(2,\Z)$ action on $\Ha$.
\begin{equation}
    \f = \{z \in \Ha\mid 0 \leq \Re(z) \leq 1, |z| \geq 1, |z-1|\geq 1 \}.
\end{equation}

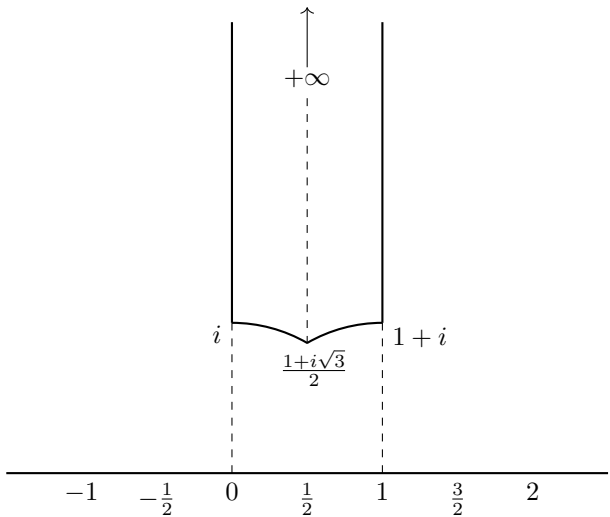
\begin{figure}[htbp]
\centering
\begin{tikzpicture}[scale=2.0, yscale=1.0]  
    \draw[black, thick] (0,1) arc[start angle=90, end angle=60, radius=1];
    
    \draw[black, thick, shift={(1,0)}] (-0.5,{sqrt(3)/2}) arc (120:90:1);
    
    \draw[dashed] (0,0) -- (0,1.0); 
    \draw[thick] (1.0,1.0) -- (1.0,3); 
    
    \draw[dashed] (1.0,0) -- (1.0,1.0); 
    \draw[thick] (0,1) -- (0,3); 
    
    \draw[thick] (-1.5,0) -- (2.5,0); 
    
    \node[below] at (-1,0) {\small $-1$};
    \node[below] at (-0.5,0) {\small $-\frac{1}{2}$};
    \node[below] at (0,0) {\small $0$};
    \node[below] at (0.5,0) {\small $\frac{1}{2}$};
    \node[below] at (1,0) {\small $1$};
    \node[below] at (1.5,0) {\small $\frac{3}{2}$};
    \node[below] at (2,0) {\small $2$};
    
    \node[above] at (-0.1,0.8) {\small $i$};
    \node[right] at (0.25,0.7) {\small $\frac{1 + i\sqrt{3}}{2}$};
    \node[right] at (1.0,0.9) {\small $1 + i$}; 
    
     \draw[dashed] (0.5,0.86) -- (0.5,2.5);
    \draw[->] (0.5,2.7) -- (0.5,3.1);
    \node[above] at (0.5,2.5) {\small $+\infty$};

\end{tikzpicture}
\caption{Fundamental domain for the PSL(2,$\mathbb{Z}$) action on $\mathbb{H}$.}
\end{figure}
The modular surface
is the quotient space $\mathcal{M} = \Ha/\text{PSL}(2,\Z)$, which is equipped with the metric induced from the upper half-plane. We know that the modular surface
has three singular points: a cusp and two ramification points of orders 2
and 3 at $i$ and $\frac{1+\sqrt{3}i}{2}$ respectively.
Note that even though $\mathcal{M}$ is not compact it has a finite area with respect to the measure induced by the hyperbolic metric on $\Ha$.
 Let $\pi : \Ha \longrightarrow \mathcal{M}$ be the
canonical projection, fixing a positive real $T_0 \gg 0$, we can partition $\mathcal{M}$ into two disjoint subsets $\mathcal{M}_c^{T_0}$ and 
$\mathcal{M}_{\infty}^{T_0}$ given by

\begin{equation} \label{core}
    \mathcal{M}_c^{T_0} = \pi({\mathcal{F}} \cap \{\Im(z) \leq T_0\}), \hspace{0.5cm} \mathcal{M}_{\infty}^{T_0} = \pi(\mathcal{F} \cap \{\Im(z) > T_0\}).
\end{equation}
Then, $\Mm^{T_0}$ is a compact subset of $\mathcal{M}$ which referred to as the \textit{compact core} for the rest of the article and $\Mi^{T_0}$ is an open neighbourhood of the single cusp in $\M$. It is a standard fact that geodesics in $\M$ are precisely the images of the geodesics in $\Ha$ under the natural projection map $\pi : \Ha \longrightarrow \mathcal{M}$.

We are interested in a class of scattering geodesic in $\M$ that runs between the cusp, which was first discussed by Victor Guillemin
in \cite{Guillemin1976-xr}.
\subsection{Scattering geodesics in $\M$}
We recall the following definition of scattering geodesics from \cite{Guillemin1976-xr}; 
\begin{definition}   A geodesic $\gamma(t)$ in $\M$ is called a scattering geodesic if it is contained in $\M \setminus \Mm^{T_0}$ for $|t| \gg 0$. The associated \textit{sojourn time} $l_{\gamma}$
is the total amount of time the geodesic spends in the compact core $\Mm^{T_0}$, starting from the time
it first entered $\Mm^{T_0}$ until the time it leaves $\Mm^{T_0}$ permanently.
\end{definition}
Next, we show that the scattering geodesics are in one-to-one correspondence with a subset of rationals in $[0,1)$.

\subsection{Classifying scattering geodesics in $\M$} We start with the following result;

\begin{theorem} \label{unqiuegeo}
For $w \in [0,1) \cap \Q$, let $\bar{\gamma}_w$ be the unique geodesic in $\Ha$ joining $w$ to $+\infty$. Given distinct $w_i = \frac{p_i}{q_i} \in  (0,1)\cap \Q$ with $\gcd(p_i,q_i) =1, q_i \geq 2$ for $i =1,2$, the geodesics $\bar{\gamma}_{w_1},\bar{\gamma}_{w_2}$ project onto the same geodesic in $\M$ if and only if the following conditions are satisfied
\begin{enumerate}
    \item $q_1 = q_2= q$.
    \item $q$ divides $p_1p_2+1$.
\end{enumerate}
\end{theorem}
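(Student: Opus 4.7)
The plan is to reduce the question to the $\text{PSL}(2,\Z)$-action on the boundary $\R \cup \{\infty\}$. Since geodesics in $\M$ are precisely the $\pi$-images of geodesics in $\Ha$, and $\text{PSL}(2,\Z)$ acts by hyperbolic isometries that realize the deck transformations of $\pi$, the rays $\bar\gamma_{w_1}$ and $\bar\gamma_{w_2}$ project to the same geodesic in $\M$ if and only if there exists $A \in \text{PSL}(2,\Z)$ with $A(\bar\gamma_{w_1}) = \bar\gamma_{w_2}$ as subsets of $\Ha$. Because a Möbius transformation is determined by its boundary action and sends geodesics to geodesics with endpoints going to endpoints, this is in turn equivalent to $A$ mapping the unordered pair $\{w_1,\infty\}$ onto $\{w_2,\infty\}$. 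Two cases then present themselves: either $A(\infty) = \infty$ and $A(w_1) = w_2$, or $A(\infty) = w_2$ and $A(w_1) = \infty$.

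In the first case, the bottom-left entry of $A$ vanishes, so working with the normalized $\text{PSL}$-representative one has $A(z) = z + b$ for some $b \in \Z$. Since $w_1, w_2 \in (0,1)$, the equation $w_2 = w_1 + b$ forces $b = 0$, contradicting $w_1 \ne w_2$. Hence only the second case can contribute. Writing $A = \begin{pmatrix} a & b \\ c & d \end{pmatrix}$ and fixing the sign so that $c > 0$ (permissible in $\text{PSL}$), the equations $A(\infty) = a/c = w_2$ and $A(w_1) = \infty$, i.e.\ $cw_1 + d = 0$, translate to $a/c = p_2/q_2$ and $-d/c = p_1/q_1$.

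The core technical step is extracting the two claimed conditions from $\det A = 1$. I would argue that any common divisor of $a$ and $c$, and likewise of $c$ and $d$, must divide $ad - bc = 1$, so both pairs are coprime; matching each of these coprime pairs against the reduced fractions $p_i/q_i$ forces $c = q_1 = q_2 =: q$, $a = p_2$, and $d = -p_1$. Substituting back into $ad - bc = 1$ yields $-p_1 p_2 - bq = 1$, equivalently $p_1 p_2 \equiv -1 \pmod{q}$, which is condition (2). For the converse, given $q_1 = q_2 = q$ and $q \mid p_1 p_2 + 1$, I would simply exhibit the isometry explicitly: take $b := -(p_1 p_2 + 1)/q \in \Z$ and $A := \begin{pmatrix} p_2 & b \\ q & -p_1 \end{pmatrix}$, and verify directly that $\det A = 1$, $A(\infty) = w_2$, and $A(p_1/q) = \infty$, so that $A(\bar\gamma_{w_1}) = \bar\gamma_{w_2}$.

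The main obstacle I expect is the bookkeeping around the $\pm I$ ambiguity when lifting between $\text{SL}(2,\Z)$ and $\text{PSL}(2,\Z)$: without fixing the sign of $c$, one risks double-counting matrices or mismatching the reduced-fraction representatives, which would muddle the crucial conclusion $q_1 = q_2$. The hypothesis $q_i \ge 2$ plays a silent but genuine role here, guaranteeing that neither endpoint $w_i$ is itself $\text{PSL}(2,\Z)$-equivalent to $\infty$, which would place the rays outside the scope of this classification.
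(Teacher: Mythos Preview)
Your proof is correct and follows essentially the same route as the paper's: rule out the translation case using $w_1,w_2\in(0,1)$, then in the endpoint-swapping case read off $a/c=p_2/q_2$ and $-d/c=p_1/q_1$ and extract $q_1=q_2$ and $q\mid p_1p_2+1$ from $\det=1$, with the same explicit matrix for the converse. Your coprimality shortcut (any common factor of $a,c$ or of $c,d$ divides $ad-bc=1$) is marginally cleaner than the paper's scaling-factor bookkeeping, but the two arguments are interchangeable. One small correction to your closing commentary: every rational \emph{is} $\text{PSL}(2,\Z)$-equivalent to $\infty$, so that is not what $q_i\ge 2$ secures; the condition is simply redundant with $w_i\in(0,1)$ in lowest terms and plays no role in the proof.
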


\newcommand{\qq}{\mathcal{G}}
Fixing a $T_0$ as before, it follows from [Corollary 2,Theorem B1, \cite{Guillemin1976-xr}] that there are a countable number of non-trivial scattering geodesics in $\M$. Our first goal is to show that the set of scattering geodesics in $\M$ are in one-to-one correspondence with a certain subset of the interval $[0,1)$ (see \ref{qq}, Theorem \ref{scat}).

\begin{theorem} \label{scat}
 Let $\mathcal{S}$ be the set of scattering geodesics in $\M$, then there is a one-to-one correspondence between $\mathcal{S}$ and the set $\qq$ defined in \eqref{qq}, furthermore, for $w  = \frac{p}{q} \in \qq_q \subset \qq$ the corresponding unique scattering geodesic in $\M$ has an associated $\textit{sojourn time}$ equal to $2\log (qT_0)$. 
\end{theorem}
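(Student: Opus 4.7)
The plan is to build an explicit bijection $\mathcal{S} \leftrightarrow \qq$ by $\text{PSL}(2,\Z)$-normalizing the lifts of scattering geodesics to $\Ha$, and then to compute the sojourn time via a single matrix sending $w \mapsto \infty$. First, any $\gamma \in \mathcal{S}$ lifts to a complete geodesic $\bar\gamma$ in $\Ha$, and since $\gamma(t) \in \Mi$ for $|t| \gg 0$, both ends of $\bar\gamma$ must approach the unique cusp of $\M$, forcing the two boundary endpoints to lie in $\pi^{-1}(\mathrm{cusp}) = \Q \cup \{\infty\}$. Using transitivity of $\text{PSL}(2,\Z)$ on $\Q \cup \{\infty\}$ together with integer translations, I normalize $\bar\gamma$ to the vertical ray $\bar\gamma_w$ from some $w = p/q \in [0,1) \cap \Q$ (in lowest terms, with $q \ge 2$) up to $\infty$; the omitted case $w = 0$ yields the imaginary axis, which is degenerate since $0 \sim \infty$. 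Theorem \ref{unqiuegeo} then characterizes when two such lifts descend to the same geodesic in $\M$ (same $q$ with $q \mid p_1 p_2 + 1$), yielding the desired bijection with $\qq$ as defined in \eqref{qq}.

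For the sojourn time of the geodesic corresponding to $w = p/q \in \qq_q$, choose integers $a, b$ with $ap + bq = -1$ and set
\begin{equation*}
\sigma = \begin{pmatrix} a & b \\ q & -p \end{pmatrix} \in \text{PSL}(2,\Z),
\end{equation*}
so $\sigma(w) = \infty$. A direct computation using $qz - p = iqy$ and $az + b = -1/q + aiy$ gives $\sigma(w + iy) = a/q + i/(q^2 y)$. Recall that $z \in \pi^{-1}(\Mi)$ iff some $\text{PSL}(2,\Z)$-translate of $z$ has imaginary part exceeding $T_0$. Applied to the lift $\bar\gamma_w$: for $y > T_0$ take $\gamma = I$ (giving $\Im(z) > T_0$), and for $y < 1/(q^2 T_0)$ take $\gamma = \sigma$ (giving $\Im(\sigma z) > T_0$). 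At the transition values $y = T_0$ and $y = 1/(q^2 T_0)$ the lift lies on $\partial(\pi^{-1}\Mm)$, and for $T_0 \gg 0$ no other $\text{PSL}(2,\Z)$-translate can preempt these transitions. Hence the first entry into $\Mm$ and the permanent departure occur at $y = T_0$ and $y = 1/(q^2 T_0)$, giving
\begin{equation*}
l_\gamma = \int_{1/(q^2 T_0)}^{T_0} \frac{dy}{y} = 2 \log(q T_0).
\end{equation*}

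The main obstacle I expect is rigorously verifying this first-entry/last-exit identification: one must check that among all $\gamma \in \text{PSL}(2,\Z)$, only $I$ and $\sigma$ control the boundary transitions of $\bar\gamma_w$ with respect to $\pi^{-1}(\Mm)$ at the endpoints of $[1/(q^2 T_0), T_0]$. This reduces to the bound $|cw + d| \ge 1/q$ for any $\gamma = \begin{pmatrix} * & * \\ c & d \end{pmatrix}$ not sending $w$ to $\infty$, which is immediate since $cw + d = (cp + dq)/q$ is a nonzero multiple of $1/q$, combined with $|c| \ge 1$ when $c \ne 0$. A secondary, inessential subtlety is that for large $q$ relative to $T_0$ the geodesic may briefly re-enter $\pi^{-1}(\Mi)$ in the interior of $[1/(q^2 T_0), T_0]$; since the sojourn time is measured from first entry to last permanent exit, the formula $2\log(qT_0)$ is unaffected.
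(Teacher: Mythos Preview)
Your approach is essentially identical to the paper's: normalize a lift of $\gamma$ to a vertical ray $\bar\gamma_w$ with $w \in [0,1)\cap\Q$, invoke Theorem~\ref{unqiuegeo} to reduce the residual ambiguity to the pairing $p\leftrightarrow p'$ with $pp'\equiv -1\pmod q$ (hence the sets $S_q$ and $C^*_q$), and compute the sojourn time via a matrix in $\text{PSL}(2,\Z)$ sending $w\mapsto\infty$, obtaining entry and exit heights $T_0$ and $1/(q^2T_0)$ and the integral $\int_{1/(q^2T_0)}^{T_0} dy/y = 2\log(qT_0)$. The paper does exactly this (it writes the same matrix $B=\begin{psmallmatrix} a & b\\ q & -p\end{psmallmatrix}$ and the same two heights), except that it defers the ``only $I$ and $\sigma$ matter'' check to \cite{Guillemin1976-xr}, whereas your bound $|cw+d|\ge 1/q$ makes this self-contained.

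There is one genuine slip. You dismiss the case $w=0$ as ``degenerate since $0\sim\infty$'' and restrict to $q\ge 2$. But \emph{every} scattering geodesic on $\M$ has both ends at the unique cusp, so $0\sim\infty$ is no obstruction; the imaginary axis projects to a bona fide scattering geodesic (running from the cusp down to the cone point $i$ and back out), and the paper explicitly records it as the sole element of $\qq_1=\{0\}$ with sojourn time $2\log T_0$ (taking $B\colon z\mapsto -1/z$ in the computation). As written, your bijection misses this geodesic and therefore does not hit $\qq$ as defined in \eqref{qq}. The fix is immediate---simply allow $q=1$---but the statement needs it.
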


Next, we begin the construction of this set, which we denote by $\qq$.

\subsection{Construction of the set $\qq$} \label{constqq} For $q \geq 2$, consider the set \begin{equation*} 
    I_q := \{ p \in \mathbb{Z}^+ \mid 1 \leq p < q, \gcd(p, q) = 1 \}.
\end{equation*} The cardinality of \(I_q\) is precisely \(\phi(q)\).\footnote{For a natural number $n \in \mathbb{N}, \phi(n)$ is the Euler's totient function.} We also introduce a subset $S_q$ of $I_q$ defined as follows,

\begin{equation}\label{sq}
    S_q := \{ p\in \Z^+ \mid 1 \leq p < q, \, p^2 \equiv -1 \pmod{q} \}.
\end{equation}

For a given \(p \in I_q\), we now examine equation \(pp' \equiv -1 \pmod{q}\). Since \(\gcd(p, q) = 1\), there is a unique \(y_{p,q} \in I_q\) such that \(p y_{p,q} \equiv -1 \pmod{q}\).

    Now, there are two cases to consider:
    \begin{itemize}
        \item[(i)] \(p = y_{p,q}\),
        \item[(ii)] \(p \neq y_{p,q}\).
    \end{itemize}

    If \(p = y_{p,q}\), then \(p \in S_q\), where \(S_q\) is the set defined in \eqref{sq}. Let us now define the set
    \[
    C_q = \{ p \in I_q \mid p \neq y_{p,q} \}.
    \]
Observe that the elements of $C_q$ naturally form pairs, which we denote by  
$\boxed{p_1, p_2},$
where $p_1 \neq p_2$ and they satisfy the congruence relation  
$p_1 p_2 \equiv -1 \pmod{q}.$  
We now define a new subset of positive integers, denoted by $C^*_q$, which consists of the smaller integer from each such pair. Formally, we define  \begin{equation}
    C^*_q := \{ p \mid p = \min \{ p_1, p_2 \}, \boxed{p_1, p_2} \text{ is a pair in } C_q \}.
\end{equation}

Finally, we have the partition \(I_q = C_q \sqcup S_q\), where \(|C_q| = \phi(q) - s_q\) and \(|S_q| = s_q\). The cardinality of $C^*_q$ is precisely half of that of $C_q$, i.e. 
\begin{equation} \label{c*q}
    |C^*_q| = \frac{1}{2}(\phi(q)-s_q).
\end{equation}

\begin{definition} \label{qqq7}
  Let $\qq_{1} = \{0\}$ and for  \(q \geq 2, q \in \mathbb{Z}^+\), we define a subset \(\qq_q\) of the set of rational numbers as follows,
  \begin{equation}
      \qq_q := \bigg\{\frac{p}{q} \mid p \in C^*_q \sqcup S_q\bigg\}.
  \end{equation}
\end{definition}
\begin{remark}\label{orderqq}
    The set $\qq_q$ is ordered in an obvious way as a subset of rational numbers with standard ordering.
\end{remark}

\begin{definition} \label{defQ}
    Let $\qq \subset [0,1)$ be the following subset, 
    \begin{equation} \label{qq}
        \qq: = \displaystyle \bigsqcup_{q=1}^{\infty}\qq_q.
    \end{equation}
    \end{definition}
  \begin{remark} \label{ordQ}
      We order the elements of $\qq$ as follows, if $x \in \qq_{q_1}, y \in \qq_{q_2}$ with $q_1 < q_2$, then we declare $x < y$, where as if $x,y  \in \qq_q $ $x,y$ are ordered as in remark \ref{orderqq}.
  \end{remark}  
  \begin{figure}[h]
    \centering
    \includegraphics[width=0.8\textwidth]{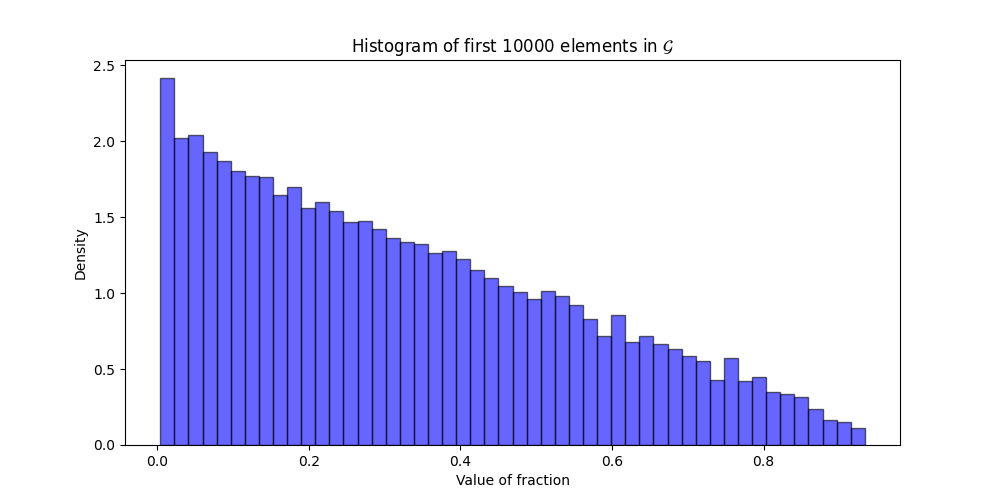}
    \caption{Density Histogram of first 10,000 elements in $\mathcal{G}$.}
    \label{fig:Q_plot}
\end{figure}

    It is clear from the definition that the cardinality of $\qq_q$ is the sum of the cardinalities of the sets $C^*_q$ and $S_q$.  Now using \eqref{c*q} and \eqref{sq}, we have the following result;

\begin{theorem} \label{cardqq}
    Let \(n_q\) denote the number of elements in \(\qq_q\). Then for $q \geq 2$, 
    \[
    n_q = \frac{1}{2}(\phi(q) + s_q),
    \]
    where \(\phi(q)\) is Euler's Totient function and \(s_q\) is the cardinality of the set $S_q$ defined in \eqref{sq}.
    
\end{theorem}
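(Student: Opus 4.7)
The plan is to assemble the statement directly from the cardinality bookkeeping already done in \S\ref{constqq}, with one genuine verification in the middle: that $C_q$ really does decompose into unordered pairs $\{p_1,p_2\}$ with $p_1p_2\equiv -1\pmod q$, so that $|C_q|$ is even and $|C_q^{*}|=\frac12|C_q|$.

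First I would unwind Definition~\ref{qqq7} to write $n_q=|\qq_q|=|C_q^{*}|+|S_q|$, using that $C_q^{*}$ and $S_q$ are disjoint as subsets of $I_q$ (since elements of $S_q$ satisfy $p=y_{p,q}$ while elements of $C_q^{*}\subset C_q$ satisfy $p\neq y_{p,q}$). Next I would record the partition $I_q=C_q\sqcup S_q$ given in the excerpt, which yields $|C_q|=\phi(q)-s_q$.

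The key step is to verify the pairing claim used to extract $C_q^{*}$. For $p\in C_q$, the unique $y_{p,q}\in I_q$ with $py_{p,q}\equiv -1\pmod q$ is also in $C_q$: indeed, the relation $py_{p,q}\equiv -1$ is symmetric in the two factors, so $y_{y_{p,q},q}=p$, and moreover $y_{p,q}\neq p$ because $p\notin S_q$. Thus the map $p\mapsto y_{p,q}$ is a fixed-point-free involution on $C_q$, and its orbits are precisely the pairs $\boxed{p_1,p_2}$. Selecting the smaller element of each orbit gives a bijection between these orbits and $C_q^{*}$, so
\begin{equation*}
|C_q^{*}|=\tfrac{1}{2}|C_q|=\tfrac{1}{2}(\phi(q)-s_q),
\end{equation*}
which is exactly \eqref{c*q}.

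Finally I would substitute:
\begin{equation*}
n_q=|C_q^{*}|+|S_q|=\tfrac{1}{2}(\phi(q)-s_q)+s_q=\tfrac{1}{2}(\phi(q)+s_q),
\end{equation*}
proving the theorem. There is no real obstacle here; the only point worth stating carefully is the involutivity of $p\mapsto y_{p,q}$ on $C_q$, since the rest is a one-line consequence of the partition $I_q=C_q\sqcup S_q$ together with $|I_q|=\phi(q)$.
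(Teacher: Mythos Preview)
Your proposal is correct and follows exactly the paper's approach: the paper derives the theorem in the sentence immediately preceding it, observing that $n_q=|C_q^{*}|+|S_q|$ and invoking \eqref{c*q} and the partition $I_q=C_q\sqcup S_q$. Your only addition is to spell out carefully why $p\mapsto y_{p,q}$ is a fixed-point-free involution on $C_q$, which the paper leaves implicit in the phrase ``the elements of $C_q$ naturally form pairs''; this is a welcome clarification but not a different argument.
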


In the next section, we discuss preliminaries results.  In the final section, we prove the main results of the paper.

\section{Priliminaries}
In this section, we discuss the results required to prove the main theorems.

\subsection{Estimating $s_q$ and the sum $S(x) = \displaystyle\sum_{q \leq x} s_q$}

Recall that for $q \geq 2$, 

\begin{equation}
S_q = \{p \in \Z^+\mid  1\leq p <q \hspace{0.2cm} \text{ and } p^2 \equiv -1\pmod q\}
\end{equation}

and $s_q$ denotes the cardinality of $S_q$. We set $s_1 = 1$. 

\begin{lemma} \label{sq4k3}
If $q$ is a prime of the form $4k+3$, then $s_q = 0$.
\end{lemma}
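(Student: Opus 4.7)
The plan is to derive a contradiction from the assumption that $s_q > 0$ for a prime $q \equiv 3 \pmod 4$, using Fermat's little theorem. This is the standard argument showing that $-1$ is not a quadratic residue modulo a prime of the form $4k+3$.

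First, I would suppose for contradiction that there exists some $p$ with $1 \le p < q$ satisfying $p^2 \equiv -1 \pmod q$. Since $q$ is prime and $1 \le p < q$, we have $\gcd(p,q) = 1$, so Fermat's little theorem applies and gives $p^{q-1} \equiv 1 \pmod q$. Writing $q = 4k+3$, we have $q-1 = 4k+2$, so $(q-1)/2 = 2k+1$ is odd.

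Next, I would raise the congruence $p^2 \equiv -1 \pmod q$ to the power $(q-1)/2 = 2k+1$ to obtain
\begin{equation*}
p^{q-1} \equiv (-1)^{2k+1} \equiv -1 \pmod q.
\end{equation*}
Combined with Fermat's little theorem, this yields $1 \equiv -1 \pmod q$, i.e.\ $q \mid 2$. Since $q = 4k+3 \ge 3$, this is impossible, and hence no such $p$ exists. Therefore $S_q = \emptyset$ and $s_q = 0$.

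There is no real obstacle here; the entire argument is a three-line consequence of Fermat's little theorem combined with the parity of $(q-1)/2$. The only point worth noting is that primality of $q$ is used essentially (both for Fermat's little theorem and to ensure $\gcd(p,q)=1$ for every $1 \le p < q$), so the hypothesis that $q$ is prime of the form $4k+3$, rather than merely $q \equiv 3 \pmod 4$, is what makes the argument clean.
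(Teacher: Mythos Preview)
Your proof is correct and is essentially the same as the paper's, which simply appeals to the fact that the Legendre symbol $\left(\frac{-1}{q}\right) = -1$ for a prime $q \equiv 3 \pmod 4$; you have just written out the standard Fermat's-little-theorem proof of that fact rather than citing it.
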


\begin{proof}
This follows from the fact the value of the Legendre symbol $\left(\frac{-1}{p}\right)$ is $-1$ for a prime $p$ of the form $4k+3$.
\end{proof}

\begin{lemma}
If $q$ is a prime of the form $4k+3$, and $m = q^k$ with $k \geq 2$, then
$s_m = 0$.
\end{lemma}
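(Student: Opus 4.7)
The plan is to reduce the congruence modulo $q$ and invoke Lemma~\ref{sq4k3}. Specifically, I would argue by contradiction: suppose $s_m \neq 0$, so there exists an integer $p$ with $1 \leq p < m$ satisfying $p^2 \equiv -1 \pmod{m}$. Since $k \geq 2 \geq 1$, we have $q \mid m = q^k$, so reducing the given congruence modulo $q$ immediately yields $p^2 \equiv -1 \pmod{q}$.

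Next, let $r$ denote the residue of $p$ modulo $q$, so that $0 \leq r \leq q-1$. A quick check shows that $r \neq 0$: otherwise $q \mid p$ would force $q \mid p^2 \equiv -1 \pmod{q}$, which is absurd since $q \geq 3$. Hence $r \in \{1, \dots, q-1\}$ and satisfies $r^2 \equiv -1 \pmod{q}$, which places $r \in S_q$. This contradicts Lemma~\ref{sq4k3}, which asserts $S_q = \emptyset$ for any prime $q \equiv 3 \pmod 4$, and the proof is complete.

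The argument is essentially immediate — the lemma is a direct consequence of the mod-$q$ reduction and Lemma~\ref{sq4k3}. There is no genuine obstacle; the only minor point worth noting in the write-up is the verification that the reduction $r$ is nonzero, so that it constitutes a bona fide element of $S_q$ rather than a trivial residue.
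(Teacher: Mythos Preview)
Your proof is correct and follows exactly the same approach as the paper: reduce the congruence $p^2 \equiv -1 \pmod{q^k}$ modulo $q$ and invoke Lemma~\ref{sq4k3}. The only difference is that you add the (harmless) verification that the residue $r$ is nonzero, which the paper leaves implicit.
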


\begin{proof}
Suppose we have a solution $y$ that satisfies $y^2 \equiv -1\pmod m$. This implies $y^2 \equiv -1\pmod {q^k}$, which further implies $y^2 \equiv -1\pmod q$. However, this is not possible by lemma \ref{sq4k3}.
\end{proof}
\begin{prop} \label{sq4k1}
Suppose $q$ is a prime of the form $4k+1$. Then for any $m \geq 1$, the equation $p^2 \equiv -1 \pmod {q^m}$ has exactly two distinct solutions modulo $q^m$, i.e., $s_{q^m} = 2$.
\end{prop}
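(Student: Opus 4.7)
The plan is a direct application of Hensel's lemma, with the quadratic residue character of $-1$ handling the base case.

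First I would dispose of the case $m=1$. Since $q \equiv 1 \pmod 4$, Euler's criterion gives $(-1)^{(q-1)/2}=1$, so $-1$ is a quadratic residue modulo $q$; hence there is some $a$ with $a^2\equiv -1\pmod q$. Because $q$ is an odd prime, $a\not\equiv -a\pmod q$, so $\{a,-a\}$ gives two distinct solutions. Lagrange's theorem applied to the polynomial $x^2+1$ over the field $\Z/q\Z$ ensures these are the only ones, so $s_q=2$.

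Next I would induct on $m$. Assume $s_{q^m}=2$, with the two solutions being $a$ and $q^m-a$. Given any solution $a$ of $f(x):=x^2+1\equiv 0\pmod{q^m}$, I would search for lifts of the form $b=a+tq^m$. Expanding,
\begin{equation*}
f(b) \;=\; (a^2+1) + 2at\,q^m + t^2 q^{2m},
\end{equation*}
so $f(b)\equiv 0 \pmod{q^{m+1}}$ is equivalent to $(a^2+1)/q^m + 2at \equiv 0\pmod q$. Because $a^2\equiv -1\not\equiv 0\pmod q$ we have $\gcd(a,q)=1$, and since $q$ is odd also $\gcd(2a,q)=1$. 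Thus $t$ is uniquely determined modulo $q$, producing exactly one lift of each solution modulo $q^m$ to a solution modulo $q^{m+1}$. The two lifts are distinct modulo $q^{m+1}$ because their reductions modulo $q^m$ are already distinct, and conversely any solution modulo $q^{m+1}$ reduces to a solution modulo $q^m$ and hence, by uniqueness of lifting, must be one of the two already constructed. Therefore $s_{q^{m+1}}=2$, completing the induction.

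There is no substantial obstacle here; the only point demanding attention is verifying that $\gcd(2a,q)=1$ so that Hensel's lemma applies, which follows immediately from $q$ being odd together with $a^2\equiv -1\not\equiv 0\pmod q$.
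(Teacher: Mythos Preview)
Your proof is correct and follows essentially the same route as the paper: both handle the base case via the quadratic residue character of $-1$ modulo a prime $q\equiv 1\pmod 4$, and both perform the explicit Hensel lift $x_1 = x_0 + q^j y_0$, solving for $y_0$ after noting $\gcd(2x_0,q)=1$. If anything, your write-up is slightly more complete, since you invoke Lagrange's theorem to cap the number of solutions in the base case and explicitly argue that the two lifts remain distinct and exhaust all solutions modulo $q^{m+1}$.
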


\begin{proof}
Note that if $q^m$ divides $p^2+1$, then since $q$ is a prime, we must have $q$ dividing $p^2 +1$. So we start by investigating the base case $m=1$. For this case, the equation becomes $p^2 \equiv -1 \pmod q$. Since $-1$ is a quadratic residue modulo $q$, we know that this equation has exactly two solutions modulo $q$. We show that any such solution $x_0$ satisfying $x_0^2 \equiv -1 \pmod q$ can be lifted to a solution $x_1$ satisfying $x_1^2 \equiv -1 \pmod {q^m}$. 

 We perform this lifting inductively. Suppose we start with a solution $x_0$ modulo $q^j$ satisfying $x_0^2 \equiv -1 \pmod {q^j}$, and we construct a unique solution $x_1$ modulo $q^{j+1}$ such that $x_1^2 \equiv -1 \pmod {q^{j+1}}$ and $x_1 = x_0 + q^j y_0$.  

We begin by substituting the expression $x_1 = x_0 + q^j y_0$ into the equation $x_1^2 \equiv -1 \pmod {q^{j+1}}$ and get the following; 
\begin{align*}  
     2x_0 y_0 &\equiv -\frac{x_0^2+1}{q^j} \pmod {q}.
\end{align*}

Since $\gcd(2x_0, q) = 1$, the above equation yields a unique solution $y_0$ modulo $q$ by multiplying both sides of the last step by $(2x_0)^{-1}$ modulo $q$.  
\end{proof}
\begin{prop}
Let $q \in \Z^+$ be divisible by $n$, where $n$ is either 4 or a prime of the form $4k+3$. Then $s_q = 0$.
\end{prop}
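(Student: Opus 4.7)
The plan is to argue by contradiction using reduction modulo the divisor $n$. Suppose, toward contradiction, that $s_q \ge 1$, i.e.\ there exists $p$ with $1 \le p < q$ and $p^2 \equiv -1 \pmod{q}$. Since $n \mid q$, this congruence descends to $p^2 \equiv -1 \pmod{n}$, so it suffices to show that $x^2 \equiv -1 \pmod{n}$ has no solution in either of the two cases.

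For the case $n = 4$: I would simply enumerate squares modulo $4$. Any integer is congruent to $0, 1, 2,$ or $3$ mod $4$, and the corresponding squares are $0, 1, 0, 1$. Thus $x^2 \bmod 4 \in \{0,1\}$, whereas $-1 \equiv 3 \pmod 4$, giving the desired contradiction.

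For the case where $n$ is a prime of the form $4k+3$: this is already handled by Lemma \ref{sq4k3}, which tells us that $s_n = 0$, i.e.\ $x^2 \equiv -1 \pmod n$ has no solution (via the Legendre symbol computation $\left(\tfrac{-1}{n}\right) = (-1)^{(n-1)/2} = -1$). So again we reach a contradiction.

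Combining the two cases, no $p$ with $p^2 \equiv -1 \pmod q$ exists, hence $S_q = \emptyset$ and $s_q = 0$. There is no real obstacle here; the proof is a one-line reduction $p^2 \equiv -1 \pmod q \Rightarrow p^2 \equiv -1 \pmod n$, followed by invoking either the trivial enumeration mod $4$ or the previously proved Lemma \ref{sq4k3}. The only thing worth being slightly careful about is noting explicitly that divisibility $n \mid q$ is what licenses the reduction, so that the argument applies uniformly whether $n = 4$ or $n$ is an odd prime $\equiv 3 \pmod 4$.
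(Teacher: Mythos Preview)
Your proof is correct and essentially identical to the paper's own argument: both reduce the congruence $p^2 \equiv -1 \pmod q$ modulo the divisor $n$, then dispose of $n=4$ by direct enumeration of squares and the prime case $n \equiv 3 \pmod 4$ by invoking Lemma~\ref{sq4k3}.
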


\begin{proof}
Suppose $p$ satisfies $p^2 \equiv -1 \pmod q$. This implies $p^2 \equiv -1 \pmod{n}$. If $n = 4$, one can check by direct computation that $p^2 \equiv -1 \pmod{n}$ has no solution. The case where $n$ is a prime of the form $4k+3$ follows from Lemma \ref{sq4k3}.
\end{proof}

\begin{theorem}
Choose a $q \in \Z^+$ with one of the following prime factorizations,
\begin{equation*}
q = \prod\limits_{i=1}^{m} q_i^{n_i} \hspace{0.2cm} \text{ or } q = 2\prod\limits_{i=1}^{m} q_i^{n_i}.
\end{equation*}
Further, assume that all the primes $q_i$ are of the form $4k+1$. Then $s_q = 2^m$.
\end{theorem}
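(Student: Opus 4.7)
The plan is to reduce the counting problem to a product over prime-power moduli via the Chinese Remainder Theorem, and then apply Proposition \ref{sq4k1} on each factor.

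First I would note that since the primes $q_1,\dots,q_m$ are all odd and distinct, the moduli $q_1^{n_1},\dots,q_m^{n_m}$ (and the extra modulus $2$ in the second case) are pairwise coprime. By the Chinese Remainder Theorem, the ring isomorphism
\begin{equation*}
\Z/q\Z \;\cong\; \prod_{i=1}^{m} \Z/q_i^{n_i}\Z \quad\Bigl(\times\; \Z/2\Z \text{ in the second case}\Bigr)
\end{equation*}
identifies solutions of $p^2 \equiv -1 \pmod q$ with tuples $(p_1,\dots,p_m)$ (respectively $(p_0,p_1,\dots,p_m)$) where each component satisfies $p_i^2 \equiv -1$ in the corresponding factor. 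Hence $s_q$ factors as a product of the corresponding local solution counts.

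Next I would plug in the counts for each factor. By Proposition \ref{sq4k1}, for each $i$ the equation $p_i^2 \equiv -1 \pmod{q_i^{n_i}}$ has exactly two solutions, contributing a factor of $2$. For the extra $2$-factor in the second case, one checks directly that $p_0^2 \equiv -1 \pmod 2$ is equivalent to $p_0 \equiv 1 \pmod 2$, which has the unique solution $p_0 = 1$, contributing a factor of $1$. Multiplying, we obtain
\begin{equation*}
s_q = \prod_{i=1}^m s_{q_i^{n_i}} \;=\; 2^m
\end{equation*}
in the first case, and $s_q = 1 \cdot 2^m = 2^m$ in the second case.

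There is no real obstacle here: once CRT is invoked, everything follows from Proposition \ref{sq4k1} and a trivial check modulo $2$. The only point requiring a small remark is why only the residues coprime to $q$ (i.e.\ those counted by $s_q$) are relevant: any $p$ with $p^2 \equiv -1 \pmod q$ automatically satisfies $\gcd(p,q)=1$, since a common prime factor of $p$ and $q$ would divide $p^2+1$ and hence $1$. This ensures that the CRT count agrees with the definition of $s_q$ given in \eqref{sq}.
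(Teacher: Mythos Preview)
Your proof is correct and follows essentially the same route as the paper: both invoke the Chinese Remainder Theorem to reduce to the prime-power factors and then apply Proposition~\ref{sq4k1} (together with the trivial check modulo $2$) to obtain the factor $2$ at each of the $m$ odd primes. The only difference is presentational---you phrase the CRT step as a ring isomorphism and a multiplicativity of solution counts, whereas the paper writes out the correspondence between tuples $(b_1,\dots,b_m)$ and solutions $x$ explicitly---and your closing remark on $\gcd(p,q)=1$ is a harmless extra observation not needed for the count.
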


\begin{proof}
We prove this for the case where $q$ has a prime factorization of the first form above. We use the Chinese Remainder Theorem to explicitly construct a set of solutions to the congruence equation
\begin{equation*}
x^2 \equiv -1 \pmod q,
\end{equation*}
and also show that we have accounted for all possible solutions. For $i=1,2,...,m$, let $\{a_{i1},a_{i2}\}$ be the pair of distinct solutions modulo $q_i^{n_i}$ to the congruence equation  
\begin{equation*}  
    y^2 \equiv -1 \pmod {q_i^{n_i}}.
\end{equation*}  
Their existence follows from proposition \ref{sq4k1}. Now consider the equation  
\begin{equation*}  
    x^2 \equiv -1 \pmod q.
\end{equation*}  
This implies that $x$ must also satisfy the following congruences:  
\begin{equation*}  
    x^2 \equiv -1 \pmod {q_i^{n_i}}, \quad i=1,2,...,m .
\end{equation*}  
Thus, $x \equiv b_i \pmod{q_i^{n_i}}$, where $b_i$ is either $a_{i1}$ or $a_{i2}$.  

The system of equations  
\begin{equation}  
    x \equiv b_i \pmod{q_i^{n_i}}, \quad i=1,2,...,m  
\end{equation}  
has a unique solution $x$ modulo $q$, ensuring that $q$ divides $x^2 +1$. Since there are two distinct choices for each $b_i$, there are a total of $2^m$ distinct solutions to the equation $x^2 \equiv -1 \pmod{q}$. 

A similar argument applies when $q$ has the second form of factorization, with the only minor difference being that $x^2 \equiv -1 \pmod{2}$ has a unique solution.  
\end{proof}

\subsection{Estimating the sum $S(x)= \displaystyle \sum_{q \leq x} s_q$}\label{sss}
We start by summarizing the results we have proven so far regarding $s_q$. In what follows, $\omega(n)$ would denote the number of distinct prime factors of a natural number $n$ and $\mathcal{O} \subset \mathbb{N}$ is the subset of natural numbers such that $n \in \mathcal{O}$ if and only if any prime factor of $n$ is of the form $4k +1$.\footnote{We also include the number $1$ in $\mathcal{O}$}

\begin{theorem} \label{sqall}
    For $q \geq 2$, let $s_q$ be the cardinality of the following set
    \begin{equation*}
        \{p \mid  1 \leq p < q , p^2 \equiv -1 \pmod{q}\}.
    \end{equation*} Then we have,
    \begin{enumerate}
        \item $s_q = 0$ if $4|q$ or if $q$ is divisible by a prime of the form $4k+3$.
        \item $s_q = 2^{\omega(q)}$ if $q \in \mathcal{O}$.
        \item $s_q  = 2^{\omega(q/2)}$ if $q/2 \in \mathcal{O}$.
    \end{enumerate}  
    \end{theorem}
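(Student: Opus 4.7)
The statement is essentially a consolidation of the preceding lemmas, propositions, and the CRT-based theorem immediately preceding Section 2.2, so the plan is to assemble those pieces into a single clean classification.

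For part (1), I would argue in two cases. If $4 \mid q$, then any solution $p$ of $p^2 \equiv -1 \pmod q$ would in particular give $p^2 \equiv -1 \equiv 3 \pmod 4$; but the squares mod $4$ are only $0$ and $1$, so no such $p$ exists. If instead $q$ is divisible by some prime $\ell$ of the form $4k+3$, then any solution $p$ would reduce to a solution of $p^2 \equiv -1 \pmod \ell$, which is impossible by Lemma \ref{sq4k3} (equivalently, $\left(\frac{-1}{\ell}\right) = -1$). Together these two sub-arguments give $s_q = 0$ in both sub-cases of (1).

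For parts (2) and (3), I would invoke the theorem just established (the CRT computation) that gives $s_q = 2^m$ whenever $q = \prod_{i=1}^m q_i^{n_i}$ or $q = 2 \prod_{i=1}^m q_i^{n_i}$ with each $q_i$ a prime of the form $4k+1$. In the first case, $\omega(q) = m$, so $s_q = 2^{\omega(q)}$, which is precisely (2) under the hypothesis $q \in \mathcal{O}$. In the second case, $q/2 \in \mathcal{O}$ and $\omega(q/2) = m$, so $s_q = 2^{\omega(q/2)}$, which is precisely (3). (The factor of $2$ in front does not increase the solution count because $x^2 \equiv -1 \pmod 2$ has the unique solution $x \equiv 1 \pmod 2$.)

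There is no real obstacle here: every ingredient has been set up in the preceding subsection, and the theorem is simply a checklist that all cases have been covered. If one wanted to be fully careful, the only thing to verify is that parts (1)--(3) together exhaust all $q \ge 2$: every $q$ either is divisible by $4$, or is divisible by some prime $\equiv 3 \pmod 4$, or has all odd prime divisors $\equiv 1 \pmod 4$ and is either odd ($q \in \mathcal{O}$) or exactly $2$ times an element of $\mathcal{O}$ (i.e.\ $q/2 \in \mathcal{O}$). This dichotomy on the $2$-adic valuation and on the set of odd prime divisors is immediate and closes the proof.
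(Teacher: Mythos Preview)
Your proposal is correct and matches the paper's approach exactly: the paper presents Theorem~\ref{sqall} explicitly as a summary (``We start by summarizing the results we have proven so far regarding $s_q$'') with no separate proof, and you have faithfully assembled the preceding Lemma~\ref{sq4k3}, the proposition on divisibility by $4$ or a prime $\equiv 3 \pmod 4$, and the CRT-based theorem into the three-case classification. Your closing remark verifying that (1)--(3) exhaust all $q \ge 2$ is a welcome addition that the paper leaves implicit.
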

    \begin{lemma}
For $q \geq 1$, we have $s_q \leq \sqrt{q}$.
\end{lemma}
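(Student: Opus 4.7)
The plan is to reduce to Theorem \ref{sqall} and then observe that each prime dividing $q$ (in the non-trivial cases) is at least $5$, which makes $\sqrt{q}$ grow faster than $2^{\omega(q)}$.

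First I would dispose of the trivial cases: if $q=1$ then $s_1 = 1 = \sqrt{1}$, and if $4 \mid q$ or if $q$ has any prime divisor congruent to $3 \pmod 4$, then by Theorem \ref{sqall} we have $s_q = 0$, so the inequality is immediate. Thus it suffices to treat the two remaining cases from Theorem \ref{sqall}, namely $q \in \mathcal{O}$ and $q/2 \in \mathcal{O}$ (with $q \geq 2$).

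Next I would use the fact that every prime in $\mathcal{O}$ is at least $5$. In the first case, write $q = \prod_{i=1}^{\omega(q)} q_i^{n_i}$ with each $q_i \geq 5$, so
\begin{equation*}
q \;\geq\; \prod_{i=1}^{\omega(q)} q_i \;\geq\; 5^{\omega(q)},
\end{equation*}
and hence $\sqrt{q} \geq 5^{\omega(q)/2} = (\sqrt{5})^{\omega(q)} \geq 2^{\omega(q)} = s_q$. In the second case, $q = 2m$ with $m \in \mathcal{O}$; if $m=1$ then $q=2$ and $s_2 = 1 \leq \sqrt{2}$, and if $m>1$ then by the same estimate $m \geq 5^{\omega(m)}$, giving
\begin{equation*}
\sqrt{q} \;=\; \sqrt{2m} \;\geq\; \sqrt{2}\cdot (\sqrt{5})^{\omega(m)} \;\geq\; 2^{\omega(m)} \;=\; s_q,
\end{equation*}
since $\omega(m) = \omega(q/2)$.

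There is essentially no obstacle: the whole argument rests on the numerical inequality $\sqrt{5} > 2$, combined with the classification in Theorem \ref{sqall}. The only mildly delicate point is correctly handling the factor of $2$ when $q/2 \in \mathcal{O}$, which is dealt with by the bound $\sqrt{2} \geq 1$ above, and separately treating $q=2$.
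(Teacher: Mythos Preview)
Your proof is correct and follows essentially the same route as the paper: both arguments reduce to the classification in Theorem~\ref{sqall}, dispose of the zero cases, and then use that every prime in $\mathcal{O}$ is at least $5$ (so $\sqrt{5}>2$) to dominate $2^{\omega(q)}$ by $\sqrt{q}$. The only cosmetic difference is that for the case $q/2\in\mathcal{O}$ the paper writes $s_q=s_{q/2}<\sqrt{q/2}<\sqrt{q}$, reducing directly to the first case, whereas you redo the estimate with the extra factor $\sqrt{2}$; the content is identical.
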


\begin{proof}
We can assume that $q \geq 3$, since for $q = 1,2$, the inequality holds by direct verification. Now, by theorem \ref{sqall}, $s_q = 0$ when $4|q$ or when $q$ has a prime factor of the form $4k+3$. In these cases, there is nothing to prove.

Thus, we need to consider the remaining two cases. We start by assuming that $q \in \mathcal{O}$ and has a prime factorization of the form
\begin{equation*}
q = \prod_{i=1}^m q_i^{n_i},
\end{equation*}
where each prime $q_i$ is of the form $4k+1$. In this case, we have
\begin{equation*}
s_q = 2^m < \prod_{i=1}^m \sqrt{q_i} \leq \prod_{i=1}^m \sqrt{q_i^{n_i}} = \sqrt{q}.
\end{equation*}

If instead $q/2 \in \mathcal{O}$, then we obtain $s_q = s_{q/2} < \sqrt{q/2} < \sqrt{q}$.
\end{proof}

\begin{theorem}
For $x \geq 1$, define the sum $S(x)$ as follows,
\begin{equation}
    S(x)  = \sum_{q\leq x} s_q.
\end{equation}
Then for $x\geq 5$ we have the following bounds for $S(x)$,
\begin{equation}
    2\lfloor \sqrt{x-1} \rfloor -1 \leq S(x)\leq \frac{2}{3}(x+1)^{3/2}.
\end{equation}
\end{theorem}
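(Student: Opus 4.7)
The plan is to prove the two inequalities independently; both will be short and direct given the lemmas already established.

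\medskip

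\textbf{Upper bound.} I would simply combine the bound $s_q \le \sqrt{q}$ (proved in the previous lemma) with an integral comparison. Since $\sqrt{\cdot}$ is increasing, $\sqrt{q} \le \int_{q}^{q+1}\sqrt{t}\,\mathrm{d}t$, whence
\begin{equation*}
S(x) \;=\; \sum_{q \le x} s_q \;\le\; \sum_{q=1}^{\lfloor x\rfloor} \sqrt{q} \;\le\; \int_{0}^{\lfloor x\rfloor + 1}\sqrt{t}\,\mathrm{d}t \;=\; \tfrac{2}{3}(\lfloor x\rfloor+1)^{3/2} \;\le\; \tfrac{2}{3}(x+1)^{3/2}.
\end{equation*}

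\medskip

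\textbf{Lower bound.} The strategy here is to exhibit enough $q \le x$ with known solutions. Set $N := \lfloor \sqrt{x-1}\rfloor$, and for each integer $p$ with $1 \le p \le N$ put $q_p := p^2+1$. Then $q_p \le x$, the values $q_p$ are pairwise distinct (since $p \mapsto p^2+1$ is injective on $\mathbb{Z}^+$), and by construction $p^2 \equiv -1 \pmod{q_p}$, so $p$ is a valid solution in $[1,q_p)$. The ``conjugate'' element $q_p - p$ also satisfies $(q_p-p)^2 \equiv p^2 \equiv -1 \pmod{q_p}$ and lies in $[1,q_p)$. The two solutions coincide exactly when $p = q_p - p$, i.e.\ $p^2 - 2p + 1 = 0$, i.e.\ $p=1$. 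Hence $s_{q_1} \ge 1$ and $s_{q_p} \ge 2$ for $2 \le p \le N$, and summing over these distinct moduli yields
\begin{equation*}
S(x) \;\ge\; s_{q_1} + \sum_{p=2}^{N} s_{q_p} \;\ge\; 1 + 2(N-1) \;=\; 2\lfloor\sqrt{x-1}\rfloor - 1.
\end{equation*}

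\medskip

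There is no genuine obstacle in either direction: the upper bound is a one-line consequence of the pointwise estimate $s_q \le \sqrt{q}$ plus a Riemann sum, and the lower bound is pure construction, using only that $p^2+1$ itself is always a modulus witnessed by the pair $\{p,\, p^2+1-p\}$. The only small point requiring care is the degenerate case $p=1$, where the two members of the pair collapse and only one solution survives; this is precisely what accounts for the ``$-1$'' in $2\lfloor\sqrt{x-1}\rfloor - 1$.
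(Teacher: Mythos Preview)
Your proof is correct and follows essentially the same approach as the paper: the upper bound via the pointwise estimate $s_q\le\sqrt{q}$ and an integral comparison, and the lower bound by exhibiting the moduli $q=p^2+1$ with the solution pair $\{p,\,q-p\}$. Your handling of the degenerate case $p=1$ (where the pair collapses, yielding the ``$-1$'') is in fact more explicit than the paper's own argument.
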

\begin{proof}
    We establish the lower bound first, observe that if $q  = k^2+1$ with $k \geq 2$, then $s_q \geq 2$ as there are atleast two distinct solutions to the equation
    $p^2 \equiv -1 \pmod{q}$, namely $p = \pm k$. The lower bound follows from counting the number of positive integers below $x$ which are of the form $k^2+1$.
    To get the upper bound we use the fact that $u \mapsto \sqrt{u}$ is an increasing function on $[0,\infty)$ from which we have,
    \begin{equation*}
        S(x)  = \sum_{q \leq x} s_q < \int_{0}^{x+1} \sqrt{u}du = \frac{2}{3}(x+1)^{3/2}.
    \end{equation*}
\end{proof}
We need an asymptotic estimate for $S(x)$.  To achieve that let's define the function $\tau(x)$ as follows,
    \begin{equation*}
        \tau(x):= \sum_{\substack{n \leq x \\ n \in \mathcal{O}}} 2^{\omega(n)} .
    \end{equation*}
    Then it is easy to verify that
  \begin{equation} \label{sntaurel}
      S(x) =\tau(x) +\tau\bigg(\frac{x}{2}\bigg).
  \end{equation}  
  The next result discusses the asymptotics of $\tau(x)$ for large $x$.
  \newcommand{\kk}{\mathcal{O}}
  \newcommand{\st}{\sigma_0}
\begin{theorem} \label{taues}
    For sufficiently large $x$, we have
    \begin{equation}
        \tau(x) = \sum_{\substack{n \leq x \\ n \in \kk}} 2^{\omega(n)}  = \frac{x}{\pi} +O \bigg ( \frac{x}{\log x}\bigg).
    \end{equation}
\end{theorem}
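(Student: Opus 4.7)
The plan is to identify the Dirichlet series attached to $\tau(x)$ and extract its asymptotic via Perron's formula with a contour shift. Set $F(s) := \sum_{n \in \kk} 2^{\omega(n)}/n^s$. The summand is multiplicative and supported on integers whose prime factors are all $\equiv 1 \pmod{4}$, so its Euler product is
\begin{equation*}
F(s) \;=\; \prod_{p \equiv 1 \,(\mathrm{mod}\,4)} \frac{1 + p^{-s}}{1 - p^{-s}}.
\end{equation*}
Comparing local factors prime-by-prime (the three cases $p = 2$, $p \equiv 1 \pmod 4$, and $p \equiv 3 \pmod 4$), I would establish the factorisation
\begin{equation*}
F(s) \;=\; \frac{\zeta(s)\, L(s,\chi)}{(1 + 2^{-s})\, \zeta(2s)},
\end{equation*}
where $\chi$ is the non-trivial Dirichlet character modulo $4$. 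Since $L(s,\chi)$ is entire, $\zeta(2s)$ is non-vanishing on $\Re(s) \geq 1/2$ (its pole at $s = 1/2$ only makes $1/\zeta(2s)$ vanish there), and $1+2^{-s}$ has no zero in the right half-plane, the right-hand side is meromorphic on $\Re(s) > 1/2$ with a unique simple pole at $s = 1$ inherited from $\zeta(s)$. Using $L(1,\chi) = \pi/4$ and $\zeta(2) = \pi^2/6$, its residue is
\begin{equation*}
\mathrm{Res}_{s=1} F(s) \;=\; \frac{L(1,\chi)}{(3/2)\, \zeta(2)} \;=\; \frac{\pi/4}{(3/2)(\pi^2/6)} \;=\; \frac{1}{\pi},
\end{equation*}
matching the predicted coefficient of $x$.

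The remaining step converts this analytic input into an asymptotic for the partial sum. I would apply the truncated Perron formula on the line $\Re(s) = 1 + 1/\log x$ and shift the contour into the classical zero-free region $\sigma \geq 1 - c/\log(|t|+2)$ of both $\zeta$ and $L(s,\chi)$; the pole at $s=1$ contributes the main term $x/\pi$, while standard polynomial bounds on $\zeta$ and $L(s,\chi)$ along the shifted contour and the horizontal crossbars give an error comfortably of size $O(x/\log x)$. Equivalently, since $H(s) := L(s,\chi)/((1+2^{-s})\zeta(2s))$ is analytic and non-vanishing at $s = 1$, the Selberg--Delange framework applies directly to $F(s) = \zeta(s)\, H(s)$ with exponent $\alpha = 1$ and yields the same conclusion.

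The main technical obstacle will be the uniform control of $F(s)$ along the shifted contour up to height $T$. This rests on classical convexity bounds for $\zeta$ and $L(s,\chi)$ in vertical strips combined with the zero-free region, which is itself equivalent to a quantitative form of the prime number theorem in arithmetic progressions modulo $4$; no input stronger than these classical ingredients is needed for the modest error $O(x/\log x)$ in the statement.
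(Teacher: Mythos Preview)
Your setup is identical to the paper's: the same Dirichlet series $F(s)$, the same Euler-product manipulation leading to $F(s)=\zeta(s)L(s,\chi)/\bigl((1+2^{-s})\zeta(2s)\bigr)$, and the same residue computation giving $1/\pi$. The only divergence is the final Tauberian step. The paper simply invokes the Wiener--Ikehara theorem (citing \cite{Murty2024-zp}) to pass from the pole structure of $F(s)$ to the asymptotic, whereas you propose Perron's formula with a contour shift into the classical zero-free region (or equivalently Selberg--Delange with $\alpha=1$). Your route is somewhat more hands-on but has the advantage that it transparently delivers the stated error $O(x/\log x)$: the classical Wiener--Ikehara theorem by itself only yields $o(x)$, and obtaining a quantitative remainder genuinely requires the zero-free region input that you make explicit. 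Either way, the analytic heart of the argument---the factorisation of $F(s)$---is the same.
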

\begin{proof}
    We start by looking at the $L$-function,
    \begin{equation}
        F(s):= \sum_{n \in \kk} \frac{2^{\omega(n)}}{n^s}.
    \end{equation}
    As $2^{\omega(n)} < d(n)$\footnote{This can be easily verified by writing the prime factorization of $n$.}, where $d(n)$ is the divisor function and the series $\displaystyle \sum_{n=1}^{\infty} \frac{d(n)}{n^s}$ converges absolutely for $\Re(s) > 1$, we have that the series defining $F(s)$ converges absolutely for $\Re(s) > 1$. Hence $F(s)$ is holomorphic in this region (cf. \cite[Theorem 11.12]{Apostol2010-pc}) and we have the Euler product expansion (cf. \cite[Theorem 11.7]{Apostol2010-pc}), where the product is over all the primes of the form $4k+1$.
    \begin{align*}
        F(s) & = \prod_{p \equiv 1 \bmod{4}}\bigg( \sum_{k=0}^{\infty} \frac{2^{\omega(p^k)}}{p^{ks}}  \bigg) \\
        & = \prod_{p \equiv 1 \bmod{4}} \bigg( 1+\sum_{k=1}^{\infty} \frac{2}{p^{ks}}  \bigg) \\
        & = \prod_{p \equiv 1 \bmod{4}} \bigg( 1+ \frac{2p^{-s}}{1-p^{-s}}  \bigg) \\
        & = \prod_{p \equiv 1 \bmod{4}} \frac{1+p^{-s}}{1-p^{-s}}  \\
        & = (1+2^{-s})^{-1}\prod_{p} (1+p^{-s}) \bigg( \prod_{p \equiv {1}\bmod{4} } (1-p^{-s})^{-1} \prod_{p \equiv 3 \bmod{4}} (1+p^{-s})^{-1} \bigg )\\
        & = (1+2^{-s})^{-1}\prod_p\frac{1-p^{-2s}}{1-p^{-s}}L(s,\chi) = \frac{\zeta(s) L(s,\chi)}{(1+2^{-s})\zeta(2s)},
    \end{align*}
where $\zeta(s)$ is the Riemann zeta function and  $L(s,\chi)$ is the Dirichlet's L-function associated to the non-principal character $\chi$ modulo 4 defined by 
 \[
\chi(n) =
\begin{cases}
  0, & \text{if } n \text{ is even}, \\
  1, & \text{if } n \equiv 1 \bmod{4}, \\
  -1, & \text{if } n \equiv 3 \bmod{4}.
\end{cases}
\]
 As $L(s,\chi)$ is an entire function (cf. \cite[Theorem 12.5]{Apostol2010-pc}) using the properties of $\zeta(s)$ we know that the function $F(s)$ is meromorphic in the region $\Re(s) \geq 1$ having no poles except for one simple pole at $s = 1$ with residue $C$, where\footnote{Note that $L(1,\chi) = 1-\frac{1}{3}+\frac{1}{5}-\frac{1}{7}+... = \arctan 1 = \frac{\pi}{4}$. }
\begin{equation} \label{conc}
    C =\frac{L(1,\chi)}{(3/2)(\pi^2/6)} = \frac{4L(1,\chi)}{\pi^2} = \frac{1}{\pi}.
\end{equation}
Then applying the Wiener–Ikehara Tauberian Theorem as in \cite[Theorem 1.2]{Murty2024-zp} we conclude that for sufficiently large $x$,
\begin{equation}
     \tau(x) = \sum_{\substack{n \leq x \\ n \in \kk}} 2^{\omega(n)} = \frac{x}{\pi} +O\bigg(\frac{x}{\log x}\bigg).
\end{equation}

\end{proof}
Combining the results of Theorem \ref{taues} and\eqref{sntaurel}, we have the following result.
\begin{theorem} \label{Snfinal}
    For sufficiently large $x$, we have
  \begin{equation*}\label{E2}
      S(x) = \sum_{q \leq x}s_q  = \frac{3x}{2\pi} +O \bigg (\frac{x}{\log x} \bigg).
  \end{equation*}  
\end{theorem}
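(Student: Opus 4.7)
The plan is to simply combine the two ingredients that have already been assembled in the excerpt. Specifically, Theorem \ref{sqall} together with the definitions yields the identity $S(x) = \tau(x) + \tau(x/2)$ recorded in \eqref{sntaurel}: the first summand captures $q \in \mathcal{O}$ (where $s_q = 2^{\omega(q)}$) and the second captures $q = 2m$ with $m \in \mathcal{O}$ (where $s_q = 2^{\omega(q/2)} = 2^{\omega(m)}$), while Theorem \ref{sqall}(1) guarantees that all other $q$ contribute $0$. Theorem \ref{taues} then provides the asymptotic for each piece.

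Concretely, I would first apply Theorem \ref{taues} to the term $\tau(x)$, obtaining
\begin{equation*}
\tau(x) = \frac{x}{\pi} + O\!\left(\frac{x}{\log x}\right).
\end{equation*}
Next I would apply the same theorem with $x$ replaced by $x/2$, giving
\begin{equation*}
\tau(x/2) = \frac{x}{2\pi} + O\!\left(\frac{x/2}{\log(x/2)}\right) = \frac{x}{2\pi} + O\!\left(\frac{x}{\log x}\right),
\end{equation*}
since for $x$ sufficiently large, $\log(x/2) = \log x - \log 2 \geq \tfrac{1}{2}\log x$, so the error $O(x/\log(x/2))$ is absorbed into $O(x/\log x)$.

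Adding these two asymptotics and invoking \eqref{sntaurel}, I would conclude
\begin{equation*}
S(x) = \tau(x) + \tau(x/2) = \frac{x}{\pi} + \frac{x}{2\pi} + O\!\left(\frac{x}{\log x}\right) = \frac{3x}{2\pi} + O\!\left(\frac{x}{\log x}\right),
\end{equation*}
which is precisely the claim. There is no real obstacle at this stage: all the analytic content (the Euler product factorization of $F(s)$, the non-vanishing and entirety of $L(s,\chi)$, and the application of the Wiener–Ikehara theorem) has been discharged in the proof of Theorem \ref{taues}, and the present statement is a clean corollary obtained by splitting $S(x)$ according to the parity dichotomy in Theorem \ref{sqall}.
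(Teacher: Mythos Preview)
Your proposal is correct and follows exactly the approach of the paper: combine the identity $S(x)=\tau(x)+\tau(x/2)$ from \eqref{sntaurel} with the asymptotic for $\tau$ from Theorem \ref{taues}. The paper's own proof is just the one-line remark ``Combining the results of Theorem \ref{taues} and \eqref{sntaurel},'' and your write-up merely makes that combination explicit (including the harmless observation that $O(x/\log(x/2))=O(x/\log x)$).
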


\subsection{A counting function for the sojourn time of scattering geodesics} \label{count}

Here we estimate the sum $\Psi(x):= \displaystyle \sum_{q \leq x} n_q$, where $n_q = |\qq_q|$ with the set $\qq_q$ as defined in \eqref{qqq7}. Recall that we have the equality;

\begin{equation}
    n_q = \frac{1}{2}(\phi(q)+s_q) , s_q = |\{p \mid 1 \leq p < q, p^2 \equiv {-1} \bmod {q}\}|.
\end{equation}
From \cite[Theorem 1.1]{Liu2016-bu} we have 
\begin{equation}\label{E1}
    \sum_{q\leq x} \phi(q) = \frac{3}{\pi ^2}x^2 +O(x(\log x)^{2/3}(\log \log x)^{1/3}).
\end{equation}
for sufficiently large $x$. 
Combining \eqref{E1} along with Theorem \ref{Snfinal} we get the following result;
\begin{theorem}\label{psi}
    For sufficiently large $x$, we have,
    \begin{equation}
        \Psi(x) = \frac{3x^2}{2\pi^2} +O(x(\log x)^{2/3}(\log \log x)^{1/3}).
    \end{equation}
  
\end{theorem}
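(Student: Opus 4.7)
The statement reduces immediately to a linear combination of two sums that have already been estimated, so the plan is essentially to assemble the pieces and track the error terms carefully.

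First I would write
\begin{equation*}
\Psi(x) = \sum_{q \leq x} n_q = \frac{1}{2} \sum_{q \leq x} \phi(q) + \frac{1}{2} \sum_{q \leq x} s_q,
\end{equation*}
using the identity $n_q = \tfrac12(\phi(q)+s_q)$ from Theorem~\ref{cardqq}. The convention $s_1=1$ together with $n_1=|\qq_1|=1$ and $\phi(1)=1$ keeps this identity valid at $q=1$, so there is no bookkeeping issue at the lower end of the sum.

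Next I would insert the two asymptotics we already have. Substituting \eqref{E1} into the first sum contributes the main term $\tfrac{3x^2}{2\pi^2}$ together with an error of size $O\!\bigl(x(\log x)^{2/3}(\log\log x)^{1/3}\bigr)$. Substituting Theorem~\ref{Snfinal} into the second sum contributes $\tfrac{3x}{4\pi} + O(x/\log x)$, which is $O(x)$. Since $x = o\!\bigl(x(\log x)^{2/3}(\log\log x)^{1/3}\bigr)$ as $x\to\infty$, both the linear main term and the $O(x/\log x)$ remainder coming from the $s_q$ sum are absorbed into the larger error term coming from the $\phi$ sum.

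Combining everything gives
\begin{equation*}
\Psi(x) = \frac{3x^2}{2\pi^2} + O\!\bigl(x(\log x)^{2/3}(\log\log x)^{1/3}\bigr),
\end{equation*}
which is exactly the claim. There is no real obstacle here: all the substantive analytic work (the Wiener--Ikehara argument for $\tau(x)$ and the cited estimate from Liu) has already been carried out. The only point to be careful about is verifying that the secondary main term $\tfrac{3x}{4\pi}$ contributed by $\tfrac12 S(x)$ is indeed swallowed by the $\phi$-sum error term, which it clearly is since $(\log x)^{2/3}(\log\log x)^{1/3}\to\infty$.
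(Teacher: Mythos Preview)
Your proposal is correct and follows exactly the same route as the paper: split $\Psi(x)$ via $n_q=\tfrac12(\phi(q)+s_q)$, invoke \eqref{E1} for the totient sum and Theorem~\ref{Snfinal} for $S(x)$, and absorb the $O(x)$ contribution from $S(x)$ into the dominant error term. The paper states this combination in a single sentence, whereas you make the bookkeeping (the $q=1$ case and the absorption of the secondary main term) explicit.
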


\section{Proofs}

\subsection{Proof of Theorem \ref{unqiuegeo}:} 
    The geodesics $\bar{\gamma}_{w_1},\bar{\gamma}_{w_2}$ project onto the same geodesic in $\M$ if and only if $\exists \ \sigma \in \text{PSL}(2,\Z)$ such that
    $\sigma \bar{\gamma}_{w_1} = \bar{\gamma}_{w_2}$. In case $\sigma$ maps $\infty$ to $\infty$ and $w_1$ to $w_2$, this forces $\sigma$ to be a translation i.e. $\sigma(z) = z+n$ for some $n  \in \Z$ and then the constraint  $w_1,w_2 \in (0,1)$ forces $n$ to be 0 which in turn implies $w_1 = w_2$. 

    So for some \( \sigma = \begin{pmatrix} a & b \\ c & d \end{pmatrix} \in \text{PSL(2,$\mathbb{Z}$)} \), $a,b,c,d \in \Z$ with $ad-bc=1$ such that
    $\sigma(\infty) = w_2$ and $\sigma(w_1) = \infty$. Note that we must necessarily have $c\neq 0$. From the two conditions we get
    \begin{equation}
        \frac{a}{c} = \frac{p_2}{q_2} , \frac{-d}{c} = \frac{p_1}{q_1}.
    \end{equation}
    This gives us $a = kp_2, d= -mp_1$ and $c = kq_2 = mq_1$ for some $k,m \in \Z \setminus \{0\}$. Since we have $ad-bc =1 \implies -kmp_1p_2 -bc =1$. Since both $k,m$ divide $c$ this forces $|k| = |m|=1$, as both $q_1,q_2$ are positive we must have $k = m= \pm 1$, without loss of generality we can assume $k = m=1$.
    From this we get $a = p_2, d = -p_1$ and $c = q_1=q_2 = q$. As $ad-bc =1,$ we have $-p_1p_2 = 1+bq.$
    As a result we conclude that $q$ divides $(1+p_1p_2)$. 

    Conversely, assume that $q_1 = q_2 = q$ and that $q$ divides $p_1p_2+1$. Consider the two by two matrix \( \sigma = \begin{pmatrix} p_2 & b \\ q & -p_1 \end{pmatrix} \in \text{PSL(2,$\mathbb{Z}$)} \) where $b = -\frac{1+p_1p_2}{q}$, then it can be verified that $\sigma(\infty) = \frac{p_2}{q} =w_2$ and
    $\sigma(w_1) = \sigma(\frac{p_1}{q}) = \infty$. 

\begin{remark} \label{sc012}
Observe that for $ w \in (0,1) \cap \Q $ the two geodesics $\bar{\gamma}_w,\bar{\gamma}_0$ in $\Ha$ project onto two distinct geodesics in $\M$.
\end{remark}

\subsection{Proof of Theorem \ref{scat}}
    Let $\gamma(t)$ be a scattering geodesic in $\M$, arguing as in \cite{Guillemin1976-xr} we can pick a lift $\bar{\gamma}(t)$ in $\Ha$ of the geodesic $\gamma(t)$ such that $\bar{\gamma}(-\infty) = +\infty$ and $\bar{\gamma}(+\infty) = w = \frac{p}{q}\in \Q$. Furthermore, applying a $B \in \text{SL}(2,\Z)$ if necessary we can move $\bar{\gamma}$ into the strip \begin{equation*}
        \{z \in \Ha \mid 0 \leq \Re(z) < 1\}.
    \end{equation*} 
    That is we can choose $w$ to lie in the set $[0,1) \cap \Q$ so that $\bar{\gamma} = \bar{\gamma}_w$.
    If $w = 0,$ then $w \in \qq_1$, and in this case $\gamma$ corresponds to the unique element $0 \in \qq_1 \subset \qq$ as follows from the remark \ref{sc012}. So we can assume that $w = \frac{p}{q}$ with $q \geq 2$, $1 \leq p < q$ and $\gcd(p,q) =1$. Then from Theorem \ref{unqiuegeo} and the discussion in the beginning of \S \ref{constqq} we see that exactly one of the following two cases can occur;

        { Case I:} $p^2 \equiv -1 \pmod{q}$.  So we have $p \in S_q$ and consequently $w = \frac{p}{q} \in \qq_q \subset \qq$ is the unique element of $\qq$ associated to the scattering geodesic $\gamma$ in $\M$.  
        
        {Case II:} $\exists p' \neq p$ such that $1 \leq p' < q$ and $pp' \equiv -1 \pmod{q}$. It follows from Theorem \ref{unqiuegeo} that the geodesics $\bar{\gamma}_w$ and $\bar{\gamma}_{w'}$ project onto the same geodesic $\gamma \in \M$, where $w' = \frac{p'}{q}$. If we let $p^* = \text{min}\{p_1,p_2\}$ and
        set $w^* = \frac{p^*}{q}$, then $w^* \in \qq_q$ and $\bar{\gamma}_{w^*}$ maps down to $\gamma$ in $\M$. In this case $w^* \in \qq_q \subset \qq$ is the unique element of $\qq$ associated to the scattering geodesic $\gamma$ in $\M$.

    Conversely, for $w \in \qq \subset [0,1)$ let $\bar{\gamma}_w$ be the unique geodesic $\Ha$ with end points at $+\infty$ and $w$, then it is clear from Theorem \ref{unqiuegeo} and the discussion in \S \ref{constqq} that for different values of $w$ chosen in $\qq$, the images of $\bar{\gamma}_w$ correspond to distinct scattering geodesics in $\M$.

\begin{figure}[htbp]
\centering
\begin{tikzpicture}[scale=2.0, yscale=1.0] 
    \draw[black, thick] (0,1) arc[start angle=90, end angle=60, radius=1];
    
    \draw[black, thick, shift={(1,0)}] (-0.5,{sqrt(3)/2}) arc (120:90:1);

    \draw[dashed] (0,0) -- (0,1.0); 
    \draw[thick] (1.0,1.0) -- (1.0,3); 

     \draw[thick] (0,2) -- (1.0,2); 
    
    \draw[dashed] (1.0,0) -- (1.0,1.0); 
    \draw[thick] (0,1) -- (0,3); 
    
    \draw[thick] (-1.5,0) -- (2.5,0); 
    
    \node[below] at (-1,0) {\small $-1$};
    \node[below] at (-0.5,0) {\small $-\frac{1}{2}$};
    \node[below] at (0,0) {\small $0$};
    \node[below] at (0.5,0) {\small $\frac{1}{2}$};
    \node[below] at (1,0) {\small $1$};
    \node[below] at (1.5,0) {\small $\frac{3}{2}$};
    \node[below] at (2,0) {\small $2$};
     \node[right] at (1.0,2) {\small $\Im(z) = T_0$};
    \node[above] at (-0.1,0.8) {\small $i$};
    \node[right] at (0.25,0.7) {\small $\frac{1 + i\sqrt{3}}{2}$};
    \node[right] at (1.0,0.9) {\small $1 + i$}; 
    
     \draw[dashed] (0.5,0.86) -- (0.5,2.5);
    \draw[->] (0.5,2.7) -- (0.5,3.1);
    \node[above] at (0.5,2.5) {\small $+\infty$};
    
    \draw[thick, red] (0.3,1) -- (0.3,3); 
    \node[above] at (0.3,3) {$\bar{\gamma}_w$};
    \draw[thick, red, dotted] (0.3,0) -- (0.3,1); 
    \node[below] at (0.3,0) {\small $w$};
\end{tikzpicture}
\caption{$\bar{\gamma}_w$:= A lift in $\mathbb{H}$ of a scattering geodesic $\gamma$ in $\M$.}
\end{figure}
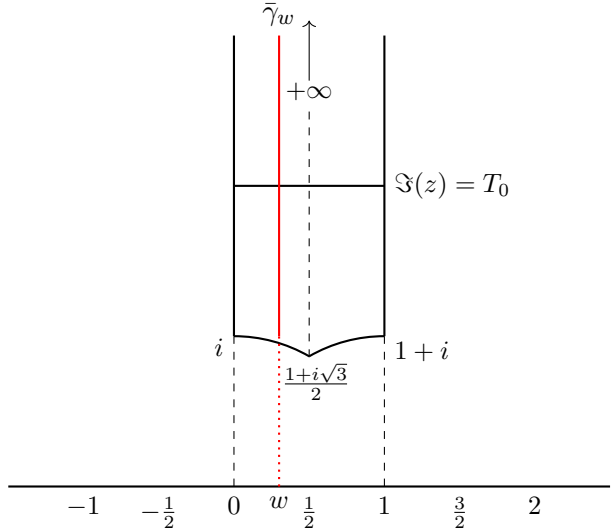

We now pick a scattering geodesic $\gamma(t)$ in $\M$ and calculate the associated sojourn time $l_{\gamma}$. As before, let $\bar{\gamma}_w(t)$ be the unique lift in $\Ha$ of the geodesic $\gamma$ such that $\bar{\gamma}_w(-\infty) = +\infty$ and $\bar{\gamma}_w(+\infty) = w = \frac{p}{q} \in  \qq_q \subset \qq$. Note that $\gamma$ enters the compact core $\Mm^{T_0}$ when $\bar{\gamma}_w$ intersects the line $\Im(z) = T_0$, that is, it enters at the point $z_0 = \frac{p}{q}+iT_0$. 

\noindent We assume $w \neq 0$ and pick a matrix \( B = \begin{pmatrix} a & b \\ q & -p \end{pmatrix} \in \text{SL(2,$\mathbb{Z}$)} \) where $a, b \in \Z$ with $1 \leq a < q$ satisfying $ap \equiv -1 \pmod{q}$ and $b = -(1+ap)q^{-1}$.\footnote{If $w = 0 \implies q=1,p=0$. In this case we choose $B$ as the map $z \mapsto -\frac{1}{z}$ and the rest of the argument follows.} Then we have $B(w) = \infty$ and arguing as in \cite{Guillemin1976-xr} we see that $\gamma$ leaves the core $\Mm^{T_0}$ when the geodesic $B\bar{\gamma}_w$ meets the line $\Im(z) = T_0$, that is, at the point $z_1 = \frac{p}{q}+i\frac{1}{T_0q^2}$. 

\noindent We identify $\M$ with the fundamental domain 
$ \mathcal{F}$ (where of course sides of $\mathcal{F}$ are identified by appropriate elements in $\text{PSL}(2,\Z)$) and consider the vertical line segment, 
\begin{equation*}
   \bigg \{\frac{p}{q}+is, \frac{1}{T_0q^2} < s < T_0\bigg\}.
\end{equation*}
Then there exists an $s_0 \in \bigg( \frac{1}{T_0q^2} , T_0 \bigg)$ such that the portion of the scattering geodesic $\gamma$ that lies inside the compact core 
$\Mm^{T_0}$ is 
\begin{equation*}
   \bigg \{\frac{p}{q}+is, s_0 \leq s < T_0\bigg\} \cup  B \bigg( \bigg \{\frac{p}{q}+is, \frac{1}{T_0q^2} < s < s_0\bigg\} \bigg).
\end{equation*}
Since $B$ acts by isometry on $\Ha$, the sojourn time $l_{\gamma}$ is just sum of the hyperbolic lengths of the two vertical line segments
\begin{equation*}
    \bigg \{\frac{p}{q}+is, \frac{1}{T_0q^2} < s < s_0\bigg\} \text{   and   }   \bigg \{\frac{p}{q}+is, s_0 \leq s < T_0\bigg\}.
\end{equation*}
As a result we get $l_{\gamma} = 2 \log (qT_0)$.\footnote{The argument involving the calculation of \textit{sojourn time} is adapted from \cite{Guillemin1976-xr}.}

\newcommand{\p}{\varphi}

\subsection{Proof of Theorem \ref{main}:}
Fix a $Y \gg 0$, and let $\Pi(Y)$ be the number of scattering geodesics in $\M$ whose sojourn time is bounded above by $\log Y$, i.e.
\begin{equation}
    \Pi(Y) = |\{\gamma \in \mathcal{S} \mid e^{l_{\gamma}} \leq Y\}|.
\end{equation}
Note that if $\gamma \in \mathcal{S}$, corresponds to a rational $\frac{p}{q} \in \qq_q \subset \qq$, then it's associated sojourn time $l_{\gamma} = 2 \log (qT_0)$, then $e^{l_{\gamma}}=(qT_0)^2 \leq Y.$
So we have

$ q \leq \frac{1}{T_0}\sqrt{Y}$. Hence,
\begin{equation}
    \Pi(Y) = \Psi \bigg(\frac{\sqrt{Y}}{T_0}\bigg).
\end{equation}
Now using Theorem \ref{psi}, we get the required result.

\textbf{Acknowledgement:}
This project was carried out entirely during the second author’s visit to NISER. The second author sincerely thanks the NISER for its hospitality and for providing a conducive research environment. Special thanks is also extended to Professor Brundaban Sahu for partly hosting the visit. The first author acknowledges support from the Science and Engineering Research Board [SRG/2023/000930].

\providecommand{\bysame}{\leavevmode\hbox to3em{\hrulefill}\thinspace}
\providecommand{\MR}{\relax\ifhmode\unskip\space\fi MR }
\providecommand{\MRhref}[2]{%
  \href{http://www.ams.org/mathscinet-getitem?mr=#1}{#2}
}
\providecommand{\href}[2]{#2}

\end{document}